\newtheorem{mycor}{Corollary}
\newtheorem{mythm}{Theorem}
\newtheorem{mylem}{Lemma}
\newtheorem{myconj}{Conjecture}
\theoremstyle{definition}
\newtheorem*{mydef}{Definition}
\newtheorem*{exm}{Example}
\theoremstyle{remark}
\newtheorem*{myremark}{Remark}
\title{
Lattice Variant of the Sensitivity Conjecture
}
\author{
Meena Boppana 
\footnote{This work was done under the direction of Dr. Scott Aaronson at MIT's Research Science Institute summer program.}\\
Hunter College High School\\
}
\date{
June 23, 2012
}
\begin{document}

\maketitle
\thispagestyle{empty}

\begin{abstract}
The Sensitivity Conjecture, posed in 1994, states that the fundamental measures known as the sensitivity and block sensitivity of a Boolean function $f$, $s(f)$ and $bs(f)$ respectively, are polynomially related.  It is known that $bs(f)$ is polynomially related to important measures in computer science including the decision-tree depth, polynomial degree, and parallel RAM computation time of $f$, but little is known how the sensitivity compares; the separation between $s(f)$ and $bs(f)$ is at least quadratic and at most exponential.  We analyze a promising variant by Aaronson that implies the Sensitivity Conjecture, stating that for all two-colorings of the $d$-dimensional lattice $\mathbb{Z}^d$, $d$ and the \emph{sensitivity} $s(C)$ are polynomially related, where $s(C)$ is the maximum number of differently-colored neighbors of a point.  We construct a coloring with the largest known separation between $d$ and $s(C)$, in which $d=O(s(C)^2)$, and demonstrate that it is optimal for a large class of colorings.  We also give a reverse reduction from the Lattice Variant to the Sensitivity Conjecture, and using this prove the first non-constant lower bound on $s(C)$.  These results indicate that the Lattice Variant can help further the limited progress on the Sensitivity Conjecture.
\end{abstract}

\section{Introduction}

We consider the following problem of two-coloring the $d$-dimensional lattice $\mathbb{Z}^d$.  The \emph{sensitivity} of a point in a two-coloring of $\mathbb{Z}^d$ is the number of differently-colored neighbors of that point, and the sensitivity of the coloring is the maximum sensitivity of all points on the lattice.  We say that a coloring is \emph{non-trivial} if the origin is colored red and there is at least one blue point on each axis.  The conjecture is that for any non-trivial coloring, the number of dimensions is always at most some polynomial of the sensitivity.  While this problem is of mathematical interest in itself, Aaronson showed that it implies the Sensitivity Conjecture.

The Sensitivity Conjecture was first posed in 1994 by Nisan and Szegedy.  The \emph{sensitivity} of a Boolean function is the number of bits in the input that, when flipped individually, change the output of the function.  The \emph{block sensitivity} is analogous, except that it is the largest number of disjoint subsets of bits such that flipping all the bits in a subset changes the output.  The Sensitivity Conjecture states that the sensitivity and block sensitivity of a Boolean function are \emph{polynomially related}--each is at most a polynomial function of the other.  In other words, the conjecture is that the separation between sensitivity and block sensitivity cannot be too large.  Nisan and Szegedy further conjectured that block sensitivity is at most a quadratic function of sensitivity, and the largest separation known to date is quadratic.  The best known upper bound on the separation, however, is exponential.

The block sensitivity of a function $f$ is known to be polynomially related to other important measures in computer science, such as the decision-tree complexity, certificate complexity, polynomial degree, and quantum oracle complexity of $f$.  Nisan \cite{Nisan CREW PRAM}  originally introduced block sensitivity to find the time needed to compute a boolean function on a parallel random access machine (PRAM), and used block sensitivity to show that the PRAM complexity is polynomially related to the decision tree complexity. The Sensitivity Conjecture, if true, would imply that the natural notion of sensitivity is related to this plethora of other measures, and would make it easier to show that new measures are polynomially related to block sensitivity.  

In Section 2, we set up Aaronson's Lattice Variant as well as the Sensitivity Conjecture, and summarize previous progress.  In Section 3, we construct a non-trivial coloring with a quadratic separation between the sensitivity and the number of dimensions, the largest known.  In Section 4, we extend Aaronson's reduction of the Sensitivity Conjecture to the Lattice Variant by providing a reduction in the other direction, mapping every coloring to a Boolean function.  In Section 5, we use the reverse reduction to establish the first non-constant lower bound on the sensitivity of non-trivial colorings in terms of the \emph{min-width} of the coloring.  Finally in Section 6, we show that our coloring from Section 3 achieves the optimal separation for the class of \emph{repeated} colorings, and describe a result by Palvolgyi that the coloring is optimal for all \emph{sliced} colorings.  

\section{Preliminaries and Previous Work}

\label{prelim}

\subsection{The Lattice Variant}

Consider a two-coloring of the $d$-dimensional lattice $\mathbb{Z}^d$, which is the set of all points $x$ in $d$ dimensions such that $x$ has all integer coordinates. Every point on the lattice is colored either red or blue. 

\begin{mydef}
A two-coloring $C$ of a $d$-dimensional lattice satisfies the \emph{non-triviality condition} if the origin is colored red and there exists at least one blue point on each of the $d$ axes.  We call such a coloring a \emph{non-trivial coloring}. 
\end{mydef}

For ease of notation, a non-trivial coloring is assumed to be in $d$ dimensions.

Let a \emph{neighbor} of a point $x$ in a coloring be a point $x'$ also on the lattice such that $|x'-x|=1$.  In other words, the neighbors of $x$ are the points which are 1 away from $x$ in a direction along an axis.

\begin{mydef}
The \emph{sensitivity of a point $x$} in a coloring $C$, denoted $s(C,x)$, is the number of neighbors of $x$ which are colored differently from $x$.  The \emph{sensitivity of a coloring $C$}, denoted $s(C)$, is the maximum of $s(C,x)$ over all $x$.
\end{mydef}

\begin{myremark}
The non-triviality condition guarantees that for colorings $C$ with $s(C) < d$, there are an infinite number of red and blue points.  This is because for any red point with sensitivity less than $d$, there must be another red point in one of the positive $x_1,\ldots,x_d$ directions, so one can move arbitrarily far away from the origin.  The same holds for blue points.   
\end{myremark}

\begin{myconj}  (Aaronson \cite{Aaronson: mathoverflow})
There exist constants $c$ and $k$ such that for all non-trivial colorings $C$, 
\[
d \leq c \cdot s(C)^{k}.
\]
\end{myconj}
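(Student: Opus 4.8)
The plan is to aim not merely for some polynomial but for the sharp quadratic bound $d \le c\cdot s(C)^2$, since the coloring built in Section~3 shows $d = \Theta(s(C)^2)$ is attainable and hence that $k=2$ is the best one could hope for. Writing $s = s(C)$, this amounts to exhibiting, in every non-trivial coloring, a point with at least $\Omega(\sqrt{d})$ differently-colored neighbors. The first move is to reduce to a finite configuration: by non-triviality there is on each axis $i$ a blue point at some finite distance $m_i$ from the red origin, so the whole obstruction already lives inside the box $Q=\prod_{i=1}^d[-m_i,m_i]$, and since a point in the interior of $Q$ has the same sensitivity under $C$ and under the restriction $C|_Q$, it suffices to find a high-sensitivity point of $C|_Q$ (enlarging $Q$ and iterating if the witness drifts to the boundary). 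The target statement then becomes: any $2$-coloring of a finite box in which every coordinate direction, starting from the red origin, eventually changes color must contain a point of sensitivity $\Omega(\sqrt{d})$.

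From here I would try one of two routes. The inductive route fixes the hyperplane $\{x_d=0\}$ and restricts $C$ to the sublattice $\mathbb{Z}^{d-1}$; if the restriction is still non-trivial, induction yields a point of sensitivity $\Omega(\sqrt{d-1})$ there, and one would need an amortization lemma showing that when many such coordinate-restrictions are instead \emph{trivial} (the relevant axial slice is monochromatic), the ``missing'' color changes must accumulate on a common point and push its sensitivity up by $\Omega(\sqrt d)$. The structural route instead tries to show that an extremal coloring may be taken to have the special form analyzed in Section~6: apply a compression/symmetrization operation that shifts colors toward a canonical monotone or \emph{sliced} pattern, prove it never destroys non-triviality and never increases $s(C)$, and then quote the optimality of the Section~3 construction for repeated and sliced colorings to conclude for all colorings. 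A third, softer route is to push the coloring through the reverse reduction of Section~4 to a Boolean function $f$, hope that $\mathrm{bs}(f)=\Omega\!\bigl(d^{\,\Omega(1)}\bigr)$ while $s(f)=O\!\bigl(s(C)^{O(1)}\bigr)$, and invoke the Sensitivity Conjecture for $f$.

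The hard part — and the reason I do not expect any of these routes to close cleanly — is precisely the step of \emph{combining} the $d$ essentially independent axial color changes into a single point that witnesses many of them at once. This is the same obstacle that makes the Boolean Sensitivity Conjecture difficult, namely converting large block sensitivity into large ordinary sensitivity; Section~4 makes the correspondence exact but does not dissolve it, and since the Sensitivity Conjecture is itself open and the reverse reduction is lossy (Section~5 extracts from it only a non-constant, not polynomial, lower bound on $s(C)$), the ``soft'' route is blocked on two fronts. A complete proof therefore seems to need either a genuinely new handle on the Boolean problem or a lattice-specific tool — for instance an isoperimetric or spectral inequality controlling the boundary surface between the red and blue regions of $\mathbb{Z}^d$ — and it is the search for such a tool, guided by the extremal colorings of Sections~3 and~6, that I would pursue.
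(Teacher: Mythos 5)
There is no proof to compare against: the statement you were given is a \emph{conjecture} (Aaronson's Lattice Variant), and the paper never proves it. Since the Lattice Variant implies the Sensitivity Conjecture via Aaronson's reduction, a complete proof would resolve a problem open since 1994; the paper's contribution is only a collection of partial results around it --- a coloring achieving $d = 2r(C)^2 - r(C)$ (so any true bound must have $k \geq 2$), a reverse reduction from colorings to Boolean functions, the min-width lower bound $s \geq \alpha\, d^{1/k}$, and optimality of the construction within the classes of repeated and sliced colorings. Your write-up is honest about this, and your assessment of the obstruction is accurate: the difficulty is exactly the conversion of $d$ ``independent'' axial color changes (block-sensitivity-like information) into one point witnessing many of them (sensitivity-like information), which is the same barrier as in the Boolean problem.

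That said, none of your three routes is close to a proof, and each has a concrete hole worth naming. The inductive route hinges on an unstated ``amortization lemma'' for the case where coordinate restrictions become trivial; no such lemma is known, and this is where the whole difficulty sits, so the induction is not a reduction but a restatement. The structural route would need a compression operation that never increases $s(C)$ and lands in the sliced or repeated class; the paper's Section~6 results are upper bounds \emph{within} those classes precisely because no such symmetrization is available, and a general coloring need not resemble a sliced one at all. The soft route fails even granting the Sensitivity Conjecture: Theorem~\ref{converse} gives $bs(f) \geq d$ but only $s(f) \leq k \cdot s(C)$ with $k$ the min-width, and the min-width is not bounded by any function of $s(C)$, so the loss is not polynomial --- this is exactly why Section~5 extracts only the bound $s \geq \alpha\, d^{1/k}$, which is non-constant but far from polynomial in $d$ for large $k$. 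Finally, note that targeting the sharp exponent $k=2$ is aiming at a statement possibly stronger than the conjecture itself; the construction only shows $k \geq 2$ is necessary, not that a quadratic bound is the truth.
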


We now present a slightly modified version of the sensitivity of a coloring, which we show is almost equivalent to the original definition.

\begin{mydef}
The \emph{axis-sensitivity} of a point $x$ in a coloring $C$, denoted $r(C,x)$, is the number of axes from $x$  along which there lies a differently-colored neighbor.  (Note that if there is a differently-colored neighbor both north and south of a point, it counts as one towards $r(C,x)$ and as two towards $s(C,x)$.)  We let $r(C)$ be the maximum over all points $x$ of $r(C,x)$.  
\end{mydef}

 It is clear that for all colorings $C$, $s(C) \leq r(C) \leq 2s(C)$, since there are between one and two differently-colored neighbors on each axis counting towards $r(C)$.

Furthermore, we show that any non-trivial coloring with $r(C)=n$ can be transformed into a non-trivial coloring $C'$ with $s(C')=n$.  Consider the coloring $C'$ constructed by replacing each point in the coloring $C$ with a hypercube of $2^d$ points all having the same color.  So every point $(x_1,\ldots,x_n)$ is replaced with the points of the form $(y_1,\ldots,y_n)$, where for all $i$, $2x_i-1 \leq y_i \leq 2x_i$.  In this case for all points $x$, $r(C',x)=s(C',x)$, since there is at most one neighbor along each axis from $x$.  Also $r(C',x)=r(C,x)$, so $s(C',x)=r(C,x)$ and $s(C')=r(C)$.  

This transformation conveniently allows us to use $r(C)$ instead of $s(C)$.

\vspace{.1 in}

Furthermore, the sensitivity of a coloring can be broken up into the red sensitivity and the blue sensitivity.
\begin{mydef}
Let the \emph{red sensitivity} of a coloring $C$, denoted $s^R(C)$ or $s^R$, be the maximum axis-sensitivity of all red points in $C$.  Let the \emph{blue sensitivity}, $s^B$, similarly be the maximum axis-sensitivity of all blue points.
\end{mydef}

\begin{figure}[ht]
\begin{center}
\includegraphics[scale=.4]{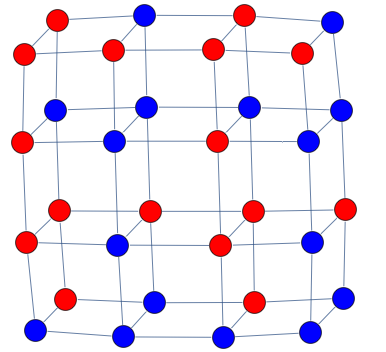}
\caption{A 3-dimensional coloring with $r(C)=2$.  Origin is in the lower-left-back corner, and figure is tessellated throughout space.}
\end{center}
\label{3d}
\end{figure}

It can be shown easily that 2 is the smallest value for the sensitivity of 3-dimensional colorings, and sensitivity 2 is achieved in Figure 1.

\subsection{The Sensitivity Conjecture}

An input $x$ is always assumed to be a string of length $n$ with bits $x_1,\ldots,x_n$.  
For an input $x$, let $x^i$ be the string with the $i$th bit flipped and all other bits intact.   Furthermore, if $B$ is a subset of  $\{1,2,\ldots,n\}$, we let $x^B$ denote the string $x$ with bits $i \in B$ flipped.   

We also refer to the all-zeroes input of a Boolean function as $\vec{0}$.  

\begin{mydef}
 Let $f: \{0,1\}^n \rightarrow \{0,1\}$ be a Boolean function.  The \emph{sensitivity of $f$ on $x$}, denoted $s(f,x)$, is the number of $i$, $1 \leq i \leq n$, such that $f(x) \neq f(x^i)$.  The \emph{sensitivity} of $f$, denoted $s(f)$, is the maximum over all inputs $x$ of $s(f,x)$. 
\end{mydef}  

For a subset $B$ of $\{1,2,\ldots,d\}$, $B$ is a \emph{sensitive block} if $f(x^B) \neq f(x)$.  

\begin{mydef}
The \emph{block sensitivity} of a Boolean function $f$ on $x$, denoted $bs(f,x)$, is the maximum number of disjoint sensitive blocks.  The \emph{block sensitivity} of $f$, denoted $bs(f)$, is the maximum over all inputs $x$ of $bs(f,x)$.   
\end{mydef}

\begin{exm}
Consider the following function that checks if the input bits are sorted, which has value 1 on the inputs 0000, 0001, 0011, 0111, 1000, 1100, 1110, or 1111 and 0 on the other inputs.  The sensitivity of this function is 2, which is achieved on the input 0000 among others.  The block sensitivity of this function is 3, however, which is achieved on the input (0)(1)(00) with sensitive blocks indicated in parentheses.
\end{exm}

The Sensitivity Conjecture is now the following: 
\begin{myconj} (Nisan and Szegedy \cite{Nisan and Szegedy})  There exist constants $c$ and $k$ such that for all Boolean functions $f$, $bs(f) \leq c \cdot s(f)^k$. 
\end{myconj}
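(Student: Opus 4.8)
\emph{Strategy.} The statement is a well-known open problem, so in place of a proof I describe the most promising line of attack, which factors through two classical reductions and one spectral step. First, I would replace $bs(f)$ by the real polynomial degree $\deg(f)$, the degree of the unique multilinear polynomial over $\mathbb{R}$ representing $f$: Nisan and Szegedy proved $bs(f) = O(\deg(f)^2)$, so it suffices to show $\deg(f) \le s(f)^{O(1)}$. Second, I would invoke the Gotsman--Linial equivalence. If $f$ has degree $d$, one restricts to a subcube on which $f$ has full degree $d$ and considers the set $S = \{x : f(x) = \prod_i x_i\}$; for every $Q_d$-edge inside $S$ (or inside $\overline S$) the flipped coordinate is sensitive for $f$, so $s(f) \ge \Gamma(\deg f)$, where $\Gamma(n)$ is the least value of $\max\bigl(\Delta(Q_n[T]),\Delta(Q_n[\overline T])\bigr)$ over vertex sets $T$ of the $n$-cube $Q_n$ with $|T| \ne 2^{n-1}$. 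Thus everything reduces to the purely combinatorial claim that $\Gamma(n) = n^{\Omega(1)}$, i.e.\ that every induced subgraph of $Q_n$ on more than $2^{n-1}$ vertices has a vertex of polynomially large degree.

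The crux is this combinatorial claim, and the obstacle is that the ordinary $0/1$ adjacency matrix of $Q_n$, although it has top eigenvalue $n$, has principal submatrices on $2^{n-1}+1$ vertices whose top eigenvalue can be tiny, so eigenvalue interlacing against it yields nothing. The idea I would pursue is to weight the edges of $Q_n$ by a well-chosen symmetric $\pm 1$ signing $A_n$, defined recursively by
\[
A_1 = \begin{pmatrix} 0 & 1 \\ 1 & 0 \end{pmatrix}, \qquad
A_n = \begin{pmatrix} A_{n-1} & I \\ I & -A_{n-1} \end{pmatrix},
\]
for which an easy induction shows $A_n^2 = nI$; hence $A_n$ has only the two eigenvalues $+\sqrt n$ and $-\sqrt n$, each with multiplicity $2^{n-1}$.

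Given $S \subseteq V(Q_n)$ with $|S| = 2^{n-1}+1$, let $A_S$ be the principal submatrix of $A_n$ indexed by $S$, a symmetric matrix of order $2^{n-1}+1$. Cauchy interlacing gives $\lambda_1(A_S) \ge \lambda_{2^{n-1}}(A_n) = \sqrt n$. Choosing the coordinate $i \in S$ of largest absolute value in a top eigenvector of $A_S$ and using that $A_S$ has zero diagonal and entries in $\{0,\pm 1\}$, one gets
\[
\deg_{Q_n[S]}(i) \;=\; \sum_{j \in S} |(A_S)_{ij}| \;\ge\; \lambda_1(A_S) \;\ge\; \sqrt n ,
\]
which is exactly the combinatorial claim with $\Gamma(n) = \sqrt n$ (for an unbalanced bipartition one simply applies this to the larger side). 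Feeding $\Gamma(n) \ge \sqrt n$ back through the two reductions yields $\deg(f) \le s(f)^2$ and hence $bs(f) = O(s(f)^4)$, establishing the conjecture with $k = 4$.

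The main obstacle is the very first move of the spectral step: nothing a priori guarantees that a signing with $A_n^2$ a scalar matrix exists, and exhibiting it is the one genuinely new ingredient — the degree bound of Nisan--Szegedy, the Gotsman--Linial reduction, Cauchy interlacing, and the eigenvector $\ell_\infty$ trick are all classical. A second route, closer to the theme of this paper, would be to prove Aaronson's Lattice Variant (Conjecture~1) directly and invoke his reduction of the Sensitivity Conjecture to it; but the colorings constructed in Sections~3 and~6 already show the lattice separation can be quadratic, and proving a matching polynomial upper bound on the lattice side appears no easier than the hypercube argument above.
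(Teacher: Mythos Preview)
The paper does not prove this statement; it is stated there as Conjecture~2, the (as of the paper's 2012 writing) open Sensitivity Conjecture of Nisan and Szegedy, so there is no proof in the paper to compare your proposal against.

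That said, what you have written is not merely a strategy but a correct proof --- it is precisely Huang's 2019 resolution of the conjecture. Your recursive signing already satisfies $A_n^2 = nI$, since
\[
A_n^2 \;=\; \begin{pmatrix} A_{n-1}^2 + I & A_{n-1}-A_{n-1} \\ A_{n-1}-A_{n-1} & A_{n-1}^2 + I \end{pmatrix} \;=\; nI,
\]
so the ``obstacle'' you flag in your last paragraph is in fact disposed of in your own text. Cauchy interlacing then gives $\lambda_1(A_S) \ge \sqrt n$ whenever $|S| > 2^{n-1}$, the $\ell_\infty$-eigenvector trick yields a vertex of degree at least $\sqrt n$ in $Q_n[S]$, Gotsman--Linial converts this to $\deg(f) \le s(f)^2$, and Nisan--Szegedy's $bs(f) \le 2\deg(f)^2$ finishes with $bs(f) \le 2\,s(f)^4$, i.e.\ $c=2$, $k=4$. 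One small point worth tightening: the set $S = \{x : f(x) = \prod_i x_i\}$ in the Gotsman--Linial step should be read in $\pm 1$ coordinates, and the condition $|S| \neq 2^{n-1}$ comes from the fact that full degree forces the top Fourier coefficient $\widehat{f}([n])$ to be nonzero. In short, there is no gap --- you have undersold a complete argument as a ``line of attack,'' and the paper, predating Huang by seven years, simply records the statement as open.
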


Aaronson's reduction shows that for every Boolean function $f$, there exists a non-trivial coloring in $bs(f)$ dimensions with $s(C) \leq s(f)$, and so the Lattice Variant implies the Sensitivity Conjecture.   

\subsubsection{Kenyon and Kutin's Bound}

A \emph{minimal block} is a sensitive block $B$ such that no proper subset of $B$ is sensitive.  The sensitive blocks can always be chosen to be minimal without altering the block sensitivity.  We observe that the size of a minimal block $B$ of a function $f$ is at most $s(f)$.  If the block sensitivity is maximized on input $x$ and $B$ is a minimal block, then the input $x^B$ is sensitive on every bit in $B$.  

We now define $l$-block sensitivity, which provides an intermediate notion between sensitivity and block sensitivity.  

\begin{mydef}
The \emph{$l$-block sensitivity} of $f$ on input $x$, denoted $bs_l(f,x)$, is the maximum number of disjoint sensitive blocks of size at most $l$.  The \emph{$l$-block sensitivity of $f$}, $bs_l(f)$, is the maximum of $bs_l(f,x)$ over all $x$.
\end{mydef}

Note that for a function $f$ on $n$ input bits, $bs_1(f)=s(f)$ and, since all blocks can be made minimal, $bs_{s(f)}(f)=bs(f)$. Thus the $l$-block sensitivity lies between the two.  

Using $l$-block sensitivity, Kenyon and Kutin \cite{Kenyon and Kutin} provide an exponential upper bound on block sensitivity in terms of sensitivity.

\begin{mythm}  (Kenyon and Kutin \cite{Kenyon and Kutin})
\[
bs_l(f) \leq c_l s(f)^l ,
\]
where 
\[
c_l = \frac{(1+\frac{1}{l-1})^{l-1}}{(l-1)!} < \frac{e}{(l-1)!}.
\]
\end{mythm}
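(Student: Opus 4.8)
The natural plan is to induct on $l$. The base case $l=1$ is exactly the definition $bs_1(f)=s(f)$, with $c_1=1$ (the $l=1$ instance of the formula for $c_l$ is to be read as $1$).

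For the inductive step, fix $f$ and an input $x$ achieving $bs_l(f,x)=bs_l(f)=:m$; without loss of generality $f(x)=0$, and $B_1,\dots,B_m$ are disjoint minimal sensitive blocks at $x$, each of size at most $l$. First dispose of the small blocks: a block of size $1$ is a bit on which $x$ is sensitive, so at most $s(f)$ of the $B_i$ are singletons. For a block $B_i$ of size $\ge 2$ and any $b\in B_i$, minimality gives $f(x^{B_i\setminus\{b\}})=f(x)=0$ while $f(x^{B_i})=1$, so at the neighbor $x^{\{b\}}$ there are two cases: either $f(x^{\{b\}})=1$, in which case $b$ is a sensitive bit of $x$; or $f(x^{\{b\}})=0$, in which case $B_i\setminus\{b\}$ is a minimal sensitive block of size $\le l-1$ at $x^{\{b\}}$ (a strictly smaller sensitive block there would, after re-adjoining $b$, contradict minimality of $B_i$ at $x$). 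Since $x$ has at most $s(f)$ sensitive bits and each lies in at most one $B_i$, after also discarding the at most $s(f)$ blocks of size $\ge 2$ that contain such a bit we are left with $g\ge m-2s(f)$ ``reducible'' blocks $G_1,\dots,G_g$: each $G_i$ has size in $[2,l]$, and for \emph{every} $b\in G_i$ the set $G_i\setminus\{b\}$ is a minimal sensitive block of size $\le l-1$ at $x^{\{b\}}$.

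The crux — and the step I expect to be the main obstacle — is to pass from these reductions, which live at $g$ different neighbors of $x$, to many disjoint sensitive blocks of size $\le l-1$ at a \emph{single} input, so that the inductive hypothesis can be applied. The plan is to pick a sub-collection $I\subseteq\{1,\dots,g\}$ and one bit $b_i\in G_i$ for each $i\in I$, and flip the whole set $S:=\{b_i:i\in I\}$ at once. At $y:=x^S$ a short computation shows $y^{G_i\setminus\{b_i\}}=x^{(S\setminus\{b_i\})\cup G_i}$, so $G_i\setminus\{b_i\}$ stays sensitive at $y$ provided $f(y)=0$ and $f(x^{(S\setminus\{b_i\})\cup G_i})=1$ for every $i\in I$. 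Each failure of these conditions can be charged to some flipped bit $b_j$ being sensitive at $x$, at some $x^{G_i}$, or at a partially flipped variant thereof, and each such input has at most $s(f)$ sensitive bits; a probabilistic or greedy selection, together with averaging over which $b_i\in G_i$ to use, should then retain a conflict-free sub-collection of size $\Omega(g/s(f))$. (Retaining only a $\Theta(1/s(f))$ fraction is in fact unavoidable, since the extremal functions satisfy $bs_l(f)=\Theta(s(f)^l)$, so one is forced to throw most blocks away.) The inductive hypothesis at $y$ then gives $|I|\le c_{l-1}\,s(f)^{l-1}$, whence $m=bs_l(f)\le g+2s(f)\le O(s(f))\cdot c_{l-1}\,s(f)^{l-1}+2s(f)$, and carrying out the selection \emph{optimally} — tracking every constant in the trade-off between the number of blocks retained and the sensitivity bound at the relevant inputs — is exactly what pins down the coefficient $c_l$, with the telescoped factor $\bigl(1+\tfrac{1}{l-1}\bigr)^{l-1}$ emerging as the product of the per-level losses. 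The delicate points to get right are that the simultaneous flip keeps the base value equal to $0$ and that a sub-block of size at most $l-1$ survives as a sensitive block for \emph{all} retained blocks; both are controlled only through the sensitivity bound at the inputs involved, so the whole proof reduces to optimizing that combinatorial trade-off.
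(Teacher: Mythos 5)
The paper does not actually prove this statement --- it quotes it from Kenyon and Kutin --- so your attempt can only be judged on its own, and as written it is a plan rather than a proof. The preparatory steps are sound: the base case $bs_1(f)=s(f)$, the minimality observations (including the correct check that $B_i\setminus\{b\}$ is a minimal sensitive block at $x^{\{b\}}$ when $f(x^{\{b\}})=0$), and the discarding of singleton blocks and of blocks containing a bit sensitive at $x$. But the heart of the argument --- passing from $g$ blocks reduced at $g$ \emph{different} neighbors of $x$ to many disjoint sensitive blocks of size at most $l-1$ at a \emph{single} input $y=x^S$ --- is only announced. You never specify the charging scheme: a simultaneous flip of many bits, each individually insensitive at $x$, can still change the value of $f$ (and can kill the sensitivity of $f$ at $x^{(S\setminus\{b_i\})\cup G_i}$), and identifying a ``responsible'' sensitive bit forces you through intermediate inputs whose sensitive coordinates are not among the ones you control; the conflict graph, the greedy/probabilistic selection, and the role of averaging over $b_i\in G_i$ are all asserted with ``should'' and ``expect.'' That selection is precisely the content of Kenyon and Kutin's theorem, so the proposal has a genuine gap at its central step.

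Moreover, even if the selection were carried out exactly as sketched, it could not yield the statement as given, because the constant $c_l$ is part of the statement. Retaining a conflict-free subcollection of size $|I|\geq \kappa\, g/s(f)$ for an unspecified absolute constant $\kappa$ and applying the inductive hypothesis gives $g \leq \frac{s(f)}{\kappa}\,c_{l-1}s(f)^{l-1}$, i.e.\ a recursion $c_l \approx c_{l-1}/\kappa$, so $c_l$ would grow exponentially in $l$. The claimed constant satisfies $c_l/c_{l-1}\approx \frac{1}{l-1}$, i.e.\ it \emph{decays} like $1/(l-1)!$, and this factorial decay is exactly what the paper needs: it sets $l=s(f)$ and uses Stirling to get $bs(f)<e^{s(f)+1}\sqrt{s(f)/2\pi}$, whereas a bound of the form $bs_l(f)\leq C^{l}s(f)^{l}$ gives only $bs(f)\leq (Cs(f))^{s(f)}$. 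To recover $c_l$ your retention ratio must improve with $l$ (roughly $|I|\gtrsim (l-1)g/s(f)$), which requires actually performing the averaging over the choice of $b_i\in G_i$ and tracking the resulting factor of about $|G_i|-1\geq 1$ per block --- the quantitative core that the proposal defers to ``optimizing the trade-off.'' As it stands, the argument establishes neither the stated coefficient nor, without the missing selection lemma, any bound at all.
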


Since $bs(f)=bs_{s(f)} (f)$, it follows that $bs(f) < e^{s(f)+1} \sqrt{\frac{s(f)}{2\pi}}$ by Stirling's formula.

 \subsubsection{Optimal Function Constructions}
 
We also introduce the 0-sensitivity and 1-sensitivity of a function, which are analogous to the red and blue sensitivity of a coloring.
\begin{mydef}
The \emph{1-sensitivity}, $s^1(f)$, of a Boolean function $f$ is $\max_{x: f(x)=1} s(f,x)$.  Similarly,  the \emph{0-sensitivity}, $s^0(f)$, is $\max_{x: f(x)=0} s(f,x)$.
\end{mydef}

 Rubinstein \cite{Rubinstein} constructed a function $f$ on $n$ input bits such that $bs(f) = \frac{1}{2}s(f)^2$ in 1995.  Virza \cite{Virza} slightly improved on the separation by constructing a function $f$ with $bs(f)=\frac{1}{2}s(f)^2+s(f)$ in 2011.
 Ambainis and Sun \cite{Ambainis and Sun} recently showed that there is a function $f$ such that $bs(f)=\frac{2}{3} s(f)^2 - \frac{1}{2}s(f)$.  

Rubinstein's function is as follows.   For a fixed $n$, let $g(x)$ be a Boolean function on $n$ variables where $n$ is even.  We let $g(x)=1$ if there exists a $j$, $1 \leq j \leq \frac{n}{2}$, such that $x_{2j-1}=x_{2j}=1$ and $x_i=0$ for all $i \neq 2j-1,2j$.  
Define a function $f$ on $n^2$ variables to be the OR function of $n$ copies of $g$; in other words, $f(x)=1$ iff there is a string $y=x_{ik} x_{ik+1}... x_{i(k+1)-1}$ such that $g(y)=1$.  Then $bs^0(f) = \frac{1}{2}n^2$ and $s^0(f) = n$, so $bs(f)=\frac{1}{2}s(f)^2$.  
 
 Ambainis and Sun defined the following class of functions: functions $f$ which are the OR of some number of copies of a function $g$ with $s^0(g)=1$.  This class of functions includes Rubinstein's function, as well as their own function.  They showed that their function achieves the optimal separation of all functions up to a linear factor in this class.
 \begin{mythm} (Ambainis and Sun \cite{Ambainis and Sun}) 
The largest possible separation between $s(f)$ and $bs(f)$ in the class of functions defined above is $bs(f)=\frac{2}{3}s(f)^2+O(s(f))$.
\end{mythm}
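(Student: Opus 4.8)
The plan is to prove the two inequalities behind this extremal statement separately. For the lower bound, the explicit construction of Ambainis and Sun cited above already exhibits an $f$ in the class with $bs(f)=\frac23 s(f)^2-\frac12 s(f)$, so the real content is the matching upper bound: \emph{every} $f$ in the class satisfies $bs(f)\le\frac23 s(f)^2+O(s(f))$.

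The first step is to reduce the upper bound from $f$ to its building block $g$. Write $f$ as the OR of $m$ copies of $g$ on disjoint variable sets, where $s^0(g)=1$. Evaluating the four complexity measures on the natural inputs gives $s^0(f)=m$ (put each copy at a $0$-input of $g$ of sensitivity $1$), $s^1(f)=s^1(g)$ (at a $1$-input of $f$ only the single copy carrying a $1$ contributes, all other copies being irrelevant to $f$), hence $s(f)=\max(m,s^1(g))$; and $bs^0(f)=m\cdot bs^0(g)$, while $bs^1(f)\le bs^1(g)\le s^1(g)$. The last bound uses $s^0(g)=1$ twice: restricting a sensitive block of $f$ at a $1$-input to a copy carrying a $1$ produces disjoint sensitive blocks of $g$ at a $1$-input, so $bs^1(f)\le bs^1(g)$; and a minimal sensitive block flipped from a $1$-input of $g$ lands on a $0$-input all of whose flipped bits are sensitive, forcing that block to have size $\le s^0(g)=1$, so $bs^1(g)=s^1(g)$. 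Combining, $bs(f)\le\max\bigl(m\cdot bs^0(g),\,s^1(g)\bigr)$. Thus, if we can show the core lemma that $s^0(g)=1$ implies $bs^0(g)\le\frac23 s^1(g)+O(1)$, then in the first case $bs(f)\le m\bigl(\frac23 s^1(g)+O(1)\bigr)\le\frac23 s(f)^2+O(s(f))$ since $m\le s(f)$ and $s^1(g)\le s(f)$, and in the second case $bs(f)\le s^1(g)=O(s(f))$ --- either way the theorem follows.

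To prove the core lemma, fix a $0$-input $z$ achieving $bs^0(g)=b$ with disjoint minimal sensitive blocks $B_1,\dots,B_b$. Two elementary observations: at most one $B_i$ is a singleton, since singleton blocks are exactly the sensitive bits at $z$ and $s^0(g)=1$; and for each $i$ the $1$-input $z^{B_i}$ has every bit of $B_i$ sensitive, by minimality, so $s^1(g)\ge|B_i|$. This already settles the case where some block is large, so the delicate regime is when all blocks have size $2$ --- the extremal shape. There a single $1$-input $z^{B_i}$ cannot reach sensitivity $\frac32 b$ from ``cross'' contributions alone, so one must work instead with $1$-inputs obtained by flipping a carefully chosen union of blocks; the key local move is to inspect $g(z^{B_i\cup B_j})$ for pairs $i\ne j$, since when it equals $0$ one extracts (again via $s^0(g)=1$ at the intermediate $0$-input) a single sensitive bit of $B_j$ at $z^{B_i}$ and symmetrically, whereas the pairs with $g(z^{B_i\cup B_j})=1$ need a separate argument. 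A counting/averaging over these $1$-inputs, trading the block sizes against the cross-contributions, produces the constant $\frac23$.

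The main obstacle is exactly this last point: the pairs with $g(z^{B_i\cup B_j})=1$ break the naive counting, and extracting the \emph{tight} constant $\frac23$ --- rather than the weaker constant a crude argument would give --- requires the right choice of auxiliary $1$-inputs together with a genuine case analysis. That is the technical heart of Ambainis and Sun's proof; the reduction of the second paragraph is essentially bookkeeping, and the lower bound is just their explicit function.
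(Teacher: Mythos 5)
First, note what you are up against: the paper does not prove this statement at all --- it is quoted verbatim as Theorem~2 with a citation to Ambainis and Sun, so there is no internal proof to compare with. Judged as a standalone proof, your proposal has a genuine gap. The second paragraph (the composition bookkeeping) is correct and is indeed the right skeleton: for $f$ an OR of $m$ disjoint copies of $g$ with $s^0(g)=1$ one does get $s^0(f)=m$, $s^1(f)=s^1(g)$, $bs^0(f)=m\cdot bs^0(g)$, $bs^1(f)\le bs^1(g)=s^1(g)$, and hence the theorem reduces exactly to the core lemma you isolate, namely that $s^0(g)=1$ forces $bs^0(g)\le\frac23 s^1(g)+O(1)$. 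But that lemma \emph{is} the theorem: it is equivalent (up to the bookkeeping) to the upper bound being claimed, and your third paragraph does not prove it. You establish only the easy preliminary facts (at most one block is a singleton, $|B_i|\le s^1(g)$, which alone give nothing better than $bs^0(g)\le s^1(g)+1$, i.e.\ the trivial $bs(f)\le s(f)^2+O(s(f))$), and then you describe the pair inputs $z^{B_i\cup B_j}$, concede that the case $g(z^{B_i\cup B_j})=1$ ``needs a separate argument,'' and assert that some counting ``produces the constant $\frac23$.'' That unproved step is precisely where the content of the result lives; deferring it to ``the technical heart of Ambainis and Sun's proof'' means the argument, as written, establishes nothing beyond the reduction. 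The lower-bound half is likewise just a citation of their construction (which is fine in context, since the paper itself only cites the result, but it is not a proof).

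So the honest summary is: you have correctly reduced the statement to its hard kernel and verified the routine OR-composition identities, but the extremal $\frac23$ bound for functions with $s^0(g)=1$ --- the only nontrivial claim --- is asserted rather than proved, and no complete route to it is given. To close the gap you would need to carry out the case analysis you allude to: handle blocks of size $\ge 3$ by a weight/averaging argument, and for size-$2$ blocks control the pairs with $g(z^{B_i\cup B_j})=1$ (this is where a cleverer choice of auxiliary $1$-inputs is needed), then balance the two contributions to extract $s^1(g)\ge\frac32\,bs^0(g)-O(1)$.
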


\section{Coloring on the Lattice}
\label{constr}

We provide a coloring which achieves the largest known separation between the sensitivity and the number of dimensions, namely $d=2r(C)^2-r(C)$.  Our coloring has its blue points defined by \emph{slices}.
A slice is a hyperplane with one non-zero coordinate, coordinates which are fixed to be 0, and other free coordinates.

\begin{mydef}
A \emph{slice} is a set of points in $\mathbb{Z}^d$ satisfying the following property.  Given some subset $X$ of $\{1,2,\ldots,d\}$, a $y$ in $\{1,2,\ldots,d\}$ not in $X$, and a constant $c \neq 0$, a point $x$ is in the set iff $x_y=c$ and for all $i \in X$, $x_i=0$.
\end{mydef}

\begin{mythm}
There exists a non-trivial coloring $C$ with $d = 2r(C)^2-r(C)$.
\end{mythm}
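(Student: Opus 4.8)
The plan is to construct, for each integer $r\ge 1$, an explicit non-trivial coloring $C$ of $\mathbb{Z}^d$ with $d = r(2r-1) = 2r^2-r$ and to show that $r(C) = r$; this gives the claimed identity directly. Informally the coloring splits the coordinates into $r$ disjoint blocks of $2r-1$ coordinates, puts a cyclic ``staircase'' of slices on each block, and colors a point blue exactly when at least one of its blocks meets one of its slices.

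\textbf{The construction.} Fix $r$, set $m=2r-1$ and $d=rm$, and partition $\{1,\dots,d\}$ into $r$ consecutive blocks $B_1,\dots,B_r$ of size $m$; write the coordinates of a point $x$ inside a block $B$ as $x_{B,1},\dots,x_{B,m}$, reading the second index cyclically modulo $m$. For each block $B$ and each $j\in\{1,\dots,m\}$ let $S_{B,j}$ be the slice defined by $x_{B,j}=3$ and $x_{B,j+1}=x_{B,j+2}=\dots=x_{B,j+r-1}=0$; this is a genuine slice in the sense of the paper, with $y=(B,j)$, zero-set $X=\{(B,j+1),\dots,(B,j+r-1)\}$ of size $r-1$, and $c=3$. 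Declare $x$ blue iff it lies in some $S_{B,j}$, red otherwise. The key structural point is that membership in $S_{B,j}$ depends only on the coordinates of $x$ in block $B$: calling a vector $v\in\mathbb{Z}^m$ \emph{block-blue} when $v_j=3$ and $v_{j+1}=\dots=v_{j+r-1}=0$ for some cyclic index $j$ (such a $j$ being \emph{active}), a point is blue iff at least one of its $r$ block-vectors is block-blue.

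\textbf{Reduction to a single block.} Non-triviality is immediate: the origin has no active index in any block, hence is red, and for a coordinate $k$ sitting at position $p$ of a block $B$, the point with $x_k=3$ and all other coordinates $0$ makes $p$ active in $B$, hence is blue. To evaluate $r(C)$ I will localize to one block. If $x$ is red, every block-vector is block-red and flipping a coordinate of block $B$ changes only block $B$, so $r(C,x)$ equals the sum over $B$ of the number of axes of $B$ whose $\pm1$-flip turns that block block-blue. If $x$ is blue with two or more block-blue blocks, no single flip can destroy both, so $r(C,x)=0$; and if $x$ is blue with exactly one block-blue block $B_0$, a red neighbour is precisely a $\pm1$-flip of an axis of $B_0$ that turns $B_0$ block-red. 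Writing $\sigma^R$ and $\sigma^B$ for the maximum over block-red, respectively block-blue, vectors $v\in\mathbb{Z}^m$ of the number of flip-sensitive axes (toward block-blue, resp.\ toward block-red), the above shows $r(C)=\max(r\,\sigma^R,\ \sigma^B)$: the two terms are attained, respectively, by the red point all of whose blocks equal a worst block-red vector and by a blue point with one maximal block-blue block and all other blocks zero.

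\textbf{The block estimates and conclusion.} It remains to prove $\sigma^R=1$ and $\sigma^B\le r$. For $\sigma^R\ge1$, the vector $(2,0,\dots,0)$ is block-red and flipping its first entry to $3$ activates index $1$ (and one checks this is its only sensitive axis). For $\sigma^R\le1$ --- the crux --- suppose a block-red $v$ had two distinct sensitive axes $k\ne k'$. Each sensitive axis is of one of two types: (A) the flip sets its own entry to $3$ while the $r-1$ entries after it are already $0$; or (B) the flip sends its own entry, previously $\pm1$, to $0$, completing a run headed by some position carrying value $3$ at forward cyclic distance at most $r-1$ before it. In each of the cases AA, AB, BB, the two ``heads'' (positions carrying value $3$) together with their accompanying runs of zeros force the forward cyclic distance between the two heads to be at least $r$ in both directions around $\mathbb{Z}/(2r-1)$; but those two distances sum to $2r-1<2r$, a contradiction. (This packing is exactly why the block length is $2r-1$ and each slice's run of zeros has length $r-1$.) For $\sigma^B\le r$: if $v$ is block-blue with nonempty active set $A$ and a $\pm1$-flip of axis $k$ makes $v$ block-red, that flip must destroy every active $j_0\in A$, forcing $k\in\{j_0,j_0+1,\dots,j_0+r-1\}$; hence all sensitive axes lie in an intersection of $|A|\ge1$ arcs of length $r$, giving at most $r$ of them. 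Combining, $r(C)=\max(r\cdot 1,\ \le r)=r$, with equality witnessed by the red point whose every block equals $(2,0,\dots,0)$ (one sensitive axis per block), so $d=r(2r-1)=2r(C)^2-r(C)$. I expect $\sigma^R\le1$ to be the main obstacle: it is the only step needing a genuine combinatorial argument, and it is precisely what pins down both the block length $2r-1$ and the length of each slice's zero-run.
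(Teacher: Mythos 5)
Your construction is exactly the paper's coloring ($r$ groups of $2r-1$ cyclically indexed coordinates, with slices defined by a head coordinate equal to $3$ followed by $r-1$ zeros), and your analysis --- reducing $r(C)$ to the per-block quantities $\sigma^R,\sigma^B$ and showing that two sensitive flips of a block-red vector would force two length-$r$ constraint windows to avoid each other inside $2r-1$ cyclic positions --- is essentially the paper's argument that a red point can be adjacent to at most one slice per group, just carried out in more detail. The one loose end is the BB subcase in which the two heads coincide (your distance-sum contradiction presumes distinct heads), but there the two flips would require $v_{j+t}=\pm1$ and $v_{j+t}=0$ simultaneously, so it is immediate; with that noted, the proof is correct and matches the paper's.
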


\begin{table}
\begin{tabular}{ccccc|ccccc|ccccc}
\scriptsize\{1,1\}&\scriptsize\{1,2\}&\scriptsize\{1,3\}&\scriptsize\{1,4\}&\scriptsize\{1,5\}&\scriptsize\{2,1\}&\scriptsize\{2,2\}&\scriptsize\{2,3\}&\scriptsize\{2,4\}&\scriptsize\{2,5\}&\scriptsize\{3,1\}&\scriptsize\{3,2\}&\scriptsize\{3,3\}&\scriptsize\{3,4\}&\scriptsize\{3,5\} \\
\hline \
3&0&0&*&*&*&*&*&*&*&*&*&*&*&* \\
*&3&0&0&*&*&*&*&*&*&*&*&*&*&* \\
*&*&3&0&0&  *&*&*&*&*&  *&*&*&*&*\\ 
0&*&*&3&0&  *&*&*&*&*&  *&*&*&*&* \\
0&0&*&*&3&  *&*&*&*&*&  *&*&*&*&* \\
*&*&*&*&*&  3&0&0&*&*&  *&*&*&*&* \\
*&*&*&*&*&  *&3&0&0&*&  *&*&*&*&* \\
*&*&*&*&*&  *&*&3&0&0&  *&*&*&*&* \\
*&*&*&*&*&  0&*&*&3&0&  *&*&*&*&* \\
*&*&*&*&*&  0&0&*&*&3&  *&*&*&*&* \\
*&*&*&*&*&  *&*&*&*&*&  3&0&0&*&* \\
*&*&*&*&*&  *&*&*&*&*&  *&3&0&0&* \\
*&*&*&*&*&  *&*&*&*&*&  *&*&3&0&0\\
*&*&*&*&*&  *&*&*&*&*&  0&*&*&3&0\\ 
*&*&*&*&*&  *&*&*&*&*&  0&0&*&*&3\\ 
\end{tabular}
\caption{A table defining each of the 15 slices in the coloring for $d=15$ and $r(C)=3$, where each row is a slice.  Asterisks represent coordinates which can take on any value.}
\label{slices}
\end{table}

\begin{proof}

Extending a coloring by Palvolgyi \cite{Aaronson: mathoverflow}, we construct the following $(2n-1) n$-dimensional slices, each divided into $n$ groups of $2n-1$ coordinates each.  ($n$ is an integer which we will show is equal to $r(C)$.)  We define the $(2n-1) n$ coordinates ${\{i,j\}}$, where $1 \leq i \leq n$ and $1 \leq j \leq 2n-1$.   Let the slice $S_{\{a,b\}}$ consist of points $x$ such that $x_{\{a,b\}}=3$ and $x_{\{a,b+1\}},\ldots, x_{\{a,b+n-1\}}=0$, where subscripts $j$ of $x_{\{i,j\}}$ are evaluated modulo $2n-1$; the remaining coordinates can take on any value.  An illustration for the case $n=3$ is shown in Table \ref{slices}.

We now define the coloring where a point $x$ is colored blue if it is in a slice $S_{\{i,j\}}$ for some $i$ and $j$, and red otherwise.  This coloring satisfies the non-triviality condition.  The origin is colored red since it is not in any slice.  For any coordinate $\{a,b\}$, the slice $S_{\{a,b\}}$ contains the point  $y$ defined by $y_{\{a,b\}}=3$ and $y_{\{i,j\}}=0$ for all $(i,j)\neq (a,b)$.  This point $y$ is on the axis in the  $\{a,b\}$ direction and is blue.  

The blue sensitivity of this coloring is $n$ since, for any blue point $x$ in a slice $S_{\{i,j\}}$, only changing one of the $n$ coordinates $x_{\{i,j\}}, x_{\{i,j+1\}},\ldots, x_{\{i,j+n-1\}}$ yields an adjacent red point.  Furthermore, the red sensitivity is also $n$.  For a point $x$ and a slice $S_{\{i,j\}}$, if $x$ is adjacent to $S_{\{i,j\}}$ then $2 \leq x_{\{i,j\}} \leq 4$ and $-1 \leq x_{\{i,j+1\}},\ldots, x_{\{i,j+n-1\}} \leq 1$.  So for a point $p$ and an integer $a$ ($1 \leq a \leq n$), $x$ can be adjacent to at most one slice of the form $S_{\{a,j\}}$ because it cannot simultaneously satisfy two of these constraints, which are strings of $n$ coordinates, in its $2n-1$ coordinates.  Ranging over all $a$, it follows that a red point $y$ can be adjacent to at most $n$ slices.  Furthermore, a red point can be adjacent to at most one point per slice and the red sensitivity is at most $n$.  Therefore the coloring in fact has $r(C)=n$, and $d=2r(C)^2-r(C)$.
\end{proof}

\section{\Large Equivalence Between Boolean and Lattice Problems}
\label{equiv}

We describe Aaronson's reduction of the Sensitivity Conjecture to the Lattice Variant, which uses the fact that for every function $f$, there exists a coloring $C$ with $s(C) \leq s(f)$ and $d=bs(f)$.  We then show a reduction in the opposite direction, from a non-trivial coloring in $d$ dimensions to a Boolean function $f$, where $bs(f) \geq d$ and $s(f)$ is at most $s(C)$ times the \emph{min-width} of the lattice.

\begin{mydef}
The \emph{min-width} of a non-trivial coloring $C$ is the minimum integer $k$ such that there is a blue point within $k$ units of the origin for all of the $d$ axes.  
\end{mydef}

\begin{mythm} (Aaronson \cite{Hatami: variations})
Given a Boolean function $f$ and blocks $B_1,B_2,\cdots,B_d$, there exists a non-trivial coloring $C$ in $d=bs(f)$ dimensions with $s(C) \leq s(f)$.
\end{mythm}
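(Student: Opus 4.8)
The plan is to embed $\mathbb{Z}^d$ into the Boolean hypercube, using the $i$-th lattice coordinate to control how many bits of the block $B_i$ have been flipped away from a fixed hard input of $f$. First I would normalize: let $z$ be an input with $bs(f,z)=bs(f)$ and let $B_1,\dots,B_d$ be disjoint sensitive blocks for $z$ with $d=bs(f)$ (we will only use that the $B_i$ are disjoint and sensitive, not minimal). Replacing $f(x)$ by $f(x\oplus z)$, and if necessary by $1-f$, changes neither $s(f)$ nor $bs(f)$ and lets us assume $z=\vec 0$ and $f(\vec 0)=0$. Fix for each $i$ an ordering $b_{i,1},\dots,b_{i,m_i}$ of $B_i$, where $m_i=|B_i|\ge 1$, and let $g_i\colon\mathbb{Z}\to\{0,1,\dots,m_i\}$ be the monotone staircase $g_i(t)=\min(m_i,\max(0,t))$. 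Define $\phi\colon\mathbb{Z}^d\to\{0,1\}^n$ by letting $\phi(x)$ be $\vec 0$ with, for each $i$, the first $g_i(x_i)$ bits of $B_i$ set to $1$ (all bits outside $\bigcup_i B_i$ stay $0$), and color $x$ blue if $f(\phi(x))=1$ and red otherwise; call this coloring $C$.

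Non-triviality is then immediate. Since $g_i(0)=0$ for all $i$, we have $\phi(\vec 0)=\vec 0$, so the origin is red; and the axis point $m_i e_i$ maps to $\vec 0^{B_i}$, which is blue because $B_i$ is sensitive, so each of the $d$ axes carries a blue point. For the sensitivity bound, note that passing from $x$ to $x\pm e_i$ changes only $x_i$, hence only bits of $B_i$, and changes $g_i$ by at most $1$, so $\phi(x)$ and $\phi(x\pm e_i)$ differ in at most one bit; if such a neighbor is colored differently it differs in exactly one bit, which is therefore a sensitive bit of $f$ at $\phi(x)$. Assign to each differently-colored neighbor of $x$ this bit. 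Neighbors along different axes receive bits in different (disjoint) blocks, and along a single axis $i$, monotonicity of $g_i$ forces $x+e_i$ (if differently colored) to receive $b_{i,\,g_i(x_i)+1}$ and $x-e_i$ (if differently colored) to receive $b_{i,\,g_i(x_i)}$, which are distinct positions. Hence the assignment is injective, so $s(C,x)$ is at most the number of sensitive bits of $f$ at $\phi(x)$, i.e. $s(C,x)\le s(f,\phi(x))\le s(f)$; maximizing over $x$ gives $s(C)\le s(f)$, and $C$ lives in $d=bs(f)$ dimensions as required.

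The step I expect to be the real obstacle is precisely this injectivity within a single axis. A careless encoding — for instance letting the number of flipped bits depend on $|x_i|$, or on any non-monotone function of $x_i$ — can make both $x+e_i$ and $x-e_i$ flip the \emph{same} bit of $B_i$, in which case a single sensitive bit of $\phi(x)$ gets charged twice and the argument only yields $s(C)\le 2s(f)$. Forcing $g_i$ to be a monotone staircase is exactly what separates ``up'' moves (which turn a new bit of $B_i$ on) from ``down'' moves (which turn the previously-last bit of $B_i$ off), so that no bit is double-counted; the boundary cases $x_i\le 0$ and $x_i\ge m_i$ are automatically consistent because $g_i$ is then locally constant, so the ``missing'' bits $b_{i,0}$ and $b_{i,m_i+1}$ are never invoked. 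Everything else in the argument is routine bookkeeping.
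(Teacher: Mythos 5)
Your construction is correct, and its heart is the same as the paper's: each lattice coordinate $x_i$ is encoded in unary as the number of leading bits of block $B_i$ that are set, with the origin mapping to the hard input $\vec 0$ and the point $m_ie_i$ mapping to $\vec 0^{B_i}$. The only real divergence is how you leave the fundamental box $\prod_i[0,m_i]$: the paper extends the coloring to all of $\mathbb{Z}^d$ by periodic tiling with mirror images (folding $x_i$ modulo $2(b_i+1)$ and reflecting), so that cross-boundary neighbors are forced to have the same color, whereas you clamp each coordinate with the staircase $g_i(t)=\min(m_i,\max(0,t))$, making the coloring constant beyond the box along each axis. Both extensions do the job; yours is somewhat simpler to verify (no modular bookkeeping, and the boundary is handled automatically because $g_i$ is locally constant there), while the paper's yields a periodic coloring, a property not needed for the theorem. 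Your explicit injective assignment of a sensitive bit of $f$ at $\phi(x)$ to each differently-colored neighbor --- up-moves charging $b_{i,\,g_i(x_i)+1}$ and down-moves charging $b_{i,\,g_i(x_i)}$ --- is a more careful rendering of the step the paper compresses into the assertion that each neighbor ``corresponds to flipping a unique bit,'' and it correctly rules out the factor-of-two loss you flag.
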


\begin{proof}
Let the sizes of $B_1,B_2,\cdots,B_d$ be $b_1,b_2,\ldots,b_d$ respectively, i.e. $|B_i|=b_i$ for all $i$.
We will demonstrate a coloring of the infinite lattice which is periodic and repeats with every $2(b_1+1) \times 2(b_2+1) \times \ldots \times 2(b_d+1)$ hypercube.  

Without loss of generality, let $\vec{0}$ be an input on which the block sensitivity is maximized, and let $f(0)=0$.  (If not one can replace variables $a_i$ with $\bar{a_i}$.)  Now for all $i$, assign an order to the input bits in $B_i$.  

First we define the coloring for all points $m=(m_1,\ldots,m_d)$ where for all $i$, $0 \leq m_i \leq b_i$.  Construct the function input $x$ where for all $i$, the first $m_i$ bits in block $B_i$ are 1, and all other bits in $B_i$ are 0.  If $f(x)=0$, then we color the point $m$ red, and otherwise we color it blue.
Since the origin corresponds to the input $\vec{0}$ and $f(\vec{0})=0$, the origin is colored red.  Furthermore for all $i$ the point $(0,\ldots,0,b_i,0,\ldots,0)$, where $b_i$ is in the $i$th coordinate, corresponds to $\vec{0}^{B_i}$ (the input where bits in $B_i$ equal 1 and all other bits are 0) and $f(\vec{0}^{B_i})=1$, so the point is colored blue.  Thus the coloring satisfies the non-triviality condition. 

We now define a general lattice point $x=(x_1,x_2,\ldots,x_d)$ in the following way.  For all $i$, we compute the following quantities $z_i$ and $y_i$ from $x_i$.  Let $z_i = x_i \pmod{2(b_i+1)}$.  If $z_i > b_i$, let $y_i = 2b_i+1 - z_i$.  If $z_i \leq b_i$, let $y_i=z_i$.  Color $x$ the same color as $y=(y_1,y_2,\ldots,y_n)$ ($y$ is in the range of points whose color was previously defined).  This alternately tiles the original $(b_1+1) \times (b_2+1) \cdots \times (b_n+1)$ hypercube and its mirror image throughout the plane.   The sensitivity of a point $x$, where the corresponding $y_i$ satisfy $0 < y_i < b_i$, is at most $s(f)$.  This is because moving from $x$ to one of its neighbors corresponds to flipping a unique bit of the function $f$, and the color of the neighboring point is different iff the output of the adjacent function input flips.  The sensitivity of a point $x$ on the border of a hypercube ($y_i=0$ or $y_i=b_i$ for some $i$) is also at most $s(f)$.  The neighbors of $x$ which are in a different hypercube are all the same color as $x$ by the fact that the neighboring hypercubes are mirror images. Neighbors of $x$ within the hypercube also correspond to flipping a bit of $f$ and so the sensitivity of $x$ is at most $s(f)$ as well.   Therefore, $s(C)$ is at most $s(f)$.

\end{proof}

\begin{mythm}
\label{converse}
Given a non-trivial coloring $C$ with min-width $k$, there exists a Boolean function $f$ such that $bs(f) \geq d$ and $s(f) \leq k \cdot s(C)$.  
\end{mythm}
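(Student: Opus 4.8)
The plan is to build a Boolean function directly from the coloring $C$, mimicking the logic of Aaronson's forward reduction (Theorem~6) but now running it in reverse. Since $C$ is non-trivial with min-width $k$, for each axis $i \in \{1,\ldots,d\}$ there is a blue point at distance at most $k$ from the origin along that axis; fix such a point and let its coordinate be $c_i$ with $1 \le c_i \le k$ (we may assume the blue point lies in the positive direction, flipping the axis otherwise). The input bits of $f$ will be organized into $d$ blocks $B_1,\ldots,B_d$ where $|B_i| = c_i \le k$, so the total number of variables is $\sum_i c_i \le kd$. An input $x \in \{0,1\}^{\sum c_i}$ will be interpreted as a lattice point $\pi(x) = (m_1,\ldots,m_d)$ where $m_i$ is the number of $1$'s among the first-through-last bits of $B_i$ in prefix order --- that is, we only allow the "staircase" inputs where within each block the $1$'s form a prefix, and for a general input we can either round down to the nearest staircase or simply define $f$ on all inputs via $m_i = $ (number of ones in $B_i$), reading off the color of the lattice point $(m_1,\ldots,m_d)$ where each $m_i$ ranges in $[0,c_i]$. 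Define $f(x) = 1$ iff the lattice point $(m_1,\ldots,m_d)$ is blue in $C$.

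The two things to verify are the block sensitivity lower bound and the sensitivity upper bound. For $bs(f) \ge d$: evaluate block sensitivity at the all-zeros input $\vec{0}$, which maps to the origin, colored red, so $f(\vec 0) = 0$. For each axis $i$, let $B_i$ itself be a candidate sensitive block: flipping all bits of $B_i$ sends $m_i$ to $c_i$, so $\pi(\vec 0^{B_i})$ is the chosen blue point on axis $i$, giving $f(\vec 0^{B_i}) = 1 \ne f(\vec 0)$. The blocks $B_1,\ldots,B_d$ are pairwise disjoint by construction, so $bs(f,\vec 0) \ge d$, hence $bs(f) \ge d$. For the sensitivity bound $s(f) \le k \cdot s(C)$: take any input $x$ with corresponding lattice point $p = (m_1,\ldots,m_d)$. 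Flipping a single bit in block $B_i$ changes $m_i$ by exactly $\pm 1$, moving $p$ to a lattice neighbor $p'$ of $p$ along axis $i$; the output $f$ flips only if $C$ assigns $p$ and $p'$ different colors. For a fixed axis $i$, there are at most two colors among $\{p + e_i, p - e_i\}$ differing from $p$'s color, contributing to $s(C,p)$; but each such differing neighbor can be "hit" by flipping any of up to $c_i \le k$ bits of $B_i$ (the $m_i$ many $1$-bits can be turned off to decrease, the $c_i - m_i$ many $0$-bits turned on to increase, and at most $\max(m_i, c_i - m_i) \le c_i \le k$ of these in the relevant direction). So each axis along which $p$ has a differently-colored neighbor contributes at most $k$ to $s(f,x)$, and the number of such axes is at most $r(C,p) \le r(C) \le 2 s(C)$ --- or, handling the two directions separately, at most $s(C,p) \le s(C)$ axis-directions each costing $\le k$ flips, giving $s(f,x) \le k \cdot s(C)$.

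A subtlety I would need to pin down carefully is the exact counting in the sensitivity argument so that the constant comes out as $k \cdot s(C)$ and not $k \cdot r(C) = 2k \cdot s(C)$: the cleanest route is to note that for each of the (at most $s(C,p)$) directed neighbors $q$ of $p$ with a different color, the bit-flips in the corresponding block that realize the move from $p$ toward $q$ number at most $k$ (they are either all the current $1$-bits or all the current $0$-bits of that block, whichever matches the direction, and that count is at most $|B_i| = c_i \le k$), and these bit sets are disjoint across distinct directed neighbors even when two neighbors share a block (one uses $1$-bits, the other uses $0$-bits). Summing over the at most $s(C)$ relevant directed neighbors gives the bound. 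I expect this careful disjointness-and-counting bookkeeping, rather than the block-sensitivity half, to be the main obstacle; the block-sensitivity half is essentially immediate from non-triviality. I would also remark that since $bs(f) \ge d$ and $s(f) \le k \cdot s(C)$, a polynomial bound $d \le \mathrm{poly}(s(C))$ in the Lattice Variant with bounded min-width translates into progress on the Sensitivity Conjecture, and conversely the Sensitivity Conjecture forces $d \le c \,(k\, s(C))^{j}$, which is the input to the lower bound of Section~5.
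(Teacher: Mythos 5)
Your construction is exactly the paper's: blocks $B_i$ whose size is the distance to the nearest blue point on axis $i$ (at most $k$), inputs mapped to lattice points by counting ones per block, block sensitivity $\geq d$ witnessed at $\vec{0}$, and sensitivity bounded by charging at most $k$ bit-flips per sensitive axis/direction, giving $s(f) \leq k \cdot s(C)$. The proposal is correct and follows essentially the same route as the paper, with your per-directed-neighbor bookkeeping being a slightly more explicit version of the paper's bound $\sum_{i \in X} b_i \leq s(C)\cdot k$.
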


\begin{proof}
There exists a blue point on each axis by the non-triviality property.  Without loss of generality, let there be at least one blue point on the \emph{positive} $i$ axis for all $i$, or else the coloring can be reflected about the $i$th axis.  Let $b_i$ be the smallest positive integer such that the point $(0,\ldots,0,b_i,0,\ldots,0)$ is blue, where $b_i$ is in the $i$th coordinate.  We define a function $f$ on $n=\Sigma_{i=1}^d b_i$ bits as follows.  Divide the bits into blocks $B_1, B_2, \ldots, B_d$, where $|B_i| = b_i$ for all $i$.  For a function input $y=(y_1,y_2,\ldots,y_n)$, let $z_i$ be the number of 1's in block $B_i$ for all $i$.  Letting $y$ correspond to the lattice point $(z_1,z_2,\ldots,z_d)$, we define $f(y)=0$ if $(z_1,z_2,\ldots,z_d)$ is red and $f(y)=1$ if it is blue.  Note that $f$ is somewhat symmetric in that the bits in any one block can be permuted without changing the output.

Each of the blocks is sensitive on the function input $\vec{0}$.  This is because flipping the bits in block $B_i$ corresponds to the blue point $(0,\ldots,0,b_i,0\ldots,0)$ where $b_i$ is the $i$th coordinate,  and so $f(y^{B_i})=1$.  Consequently, $bs(f) \geq d$.

For any point $x=(x_1,x_2,\ldots,x_n)$ let $X \subseteq \{1,2,\ldots,d\}$ be the set of axes from $x$ along which there lies a differently-colored neighbor of $x$.  Moving one unit along the $i$th axis from $x$, either in the positive or negative direction, corresponds to changing one of the bits in $B_i$ of the corresponding function input. 
 Since $|X| $ is at most $s(C)$ and $|b_i| $ is at most the min-width $k$, the sensitivity of $f$ on the corresponding input is at most
\[
\sum_{i \in X} b_i \leq s(C) \cdot k.
\] 
\end{proof}

\section{Lower Bound on the Lattice}
\label{lower}

We prove a lower bound on the sensitivity of all non-trivial colorings in terms of the min-width of the coloring and the number of dimensions. 

\begin{mythm}
For a non-trivial coloring of min-width $k$ and $s(C)=s$, 
\[
s \geq \alpha \cdot d^\frac{1}{k},
\]
where $\alpha = \frac{1}{e^2}$.
\end{mythm}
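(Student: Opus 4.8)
The plan is to run the coloring $C$ through the reverse reduction of Theorem~\ref{converse} and then apply the Kenyon--Kutin bound with the right block-size parameter. Applying Theorem~\ref{converse} to $C$ yields a Boolean function $f$ with $s(f) \le k\cdot s(C) = ks$ together with $d$ disjoint sensitive blocks $B_1,\dots,B_d$ on the all-zeroes input. The key extra observation — implicit in the proof of Theorem~\ref{converse} rather than in its statement — is that each $B_i$ has size $b_i$ equal to the distance from the origin to the nearest blue point on axis $i$, so $b_i \le k$ by the definition of min-width. Hence these $d$ blocks witness not just $bs(f)\ge d$ but the stronger inequality $bs_k(f) \ge d$, where $bs_k$ is $k$-block sensitivity.

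From there I would first record $d \le bs_k(f)$, then invoke the Kenyon--Kutin theorem with $l = k$ to get $bs_k(f) \le c_k\, s(f)^k$ where $c_k < \tfrac{e}{(k-1)!}$, and chain everything together with $s(f) \le ks$ to obtain
\[
d \;\le\; c_k (ks)^k \;<\; \frac{e\,k^k}{(k-1)!}\, s^k .
\]
What remains is elementary constant-chasing: the power-series inequality $\tfrac{k^k}{k!} < e^k$ gives $\tfrac{k^k}{(k-1)!} = k\cdot\tfrac{k^k}{k!} < k e^k$, and the inequality $k \le e^{k-1}$ (valid for all $k \ge 1$) then gives $\tfrac{e\,k^k}{(k-1)!} < k e^{k+1} \le e^{2k}$. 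Therefore $d < e^{2k}s^k$, which rearranges to $s > e^{-2}\,d^{1/k} = \alpha\,d^{1/k}$. The degenerate case $k=1$ is handled directly and trivially, since then $d \le bs_1(f) = s(f) \le s$.

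The one place that genuinely needs care — and the crux of the argument — is the first step: noticing that the reverse reduction automatically produces blocks of size at most the min-width $k$, so that the sharp $l$-block-sensitivity form of Kenyon--Kutin applies with $l = k$ rather than with $l = s(f)$. If one instead used the generic bound $bs(f) = bs_{s(f)}(f) \le c_{s(f)} s(f)^{s(f)}$, the exponent would blow up with $ks$ and yield nothing useful. After that observation the proof is just bookkeeping with Stirling-type estimates, tuned to land on the clean constant $\alpha = 1/e^2$.
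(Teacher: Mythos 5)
Your proposal is correct and follows essentially the same route as the paper: feed the coloring through the reverse reduction, observe that the resulting sensitive blocks have size at most the min-width $k$ so that $bs_k(f)\geq d$, apply Kenyon--Kutin with $l=k$ to get $d\leq c_k(ks)^k$, and handle $k=1$ separately. The only difference is cosmetic, in the elementary estimates used to extract the constant $\alpha=1/e^2$ (you use $k^k/k!<e^k$ and $k\leq e^{k-1}$, the paper uses $e^k\geq ke$ together with Stirling), and both are valid.
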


\begin{proof}

The min-width $k$ is a positive integer.  If $k=1$, then there is a blue point 1 unit away from the origin on every axis.  So the sensitivity of the origin, which is red, is at least $d$ and $s \geq \frac{1}{e^2} \cdot d$ as desired.  

Assume that $k > 1$.   A coloring with min-width $k$ and sensitivity $s$ can be reduced to a function $f$ with $bs(f)\geq d$ and $s(f)\leq ks$ by Theorem \ref{converse}. 
Because the sensitive blocks in $f$ have size at most $k$, we see that $bs_k(f) \geq d$ as well ($bs_k(f)$ is the $k$-block sensitivity defined in Section 2.2.1).  Thus applying Kenyon and Kutin's \cite{Kenyon and Kutin} result in Theorem 1 that $bs_k(f) \leq c_k \cdot s(f)^k$ yields
$d \leq bs_k(f) \leq c_k (ks)^k$.
Solving for $s$ shows that
\[
s \geq \frac{1}{k} \left( \frac{d}{c_k}\right)^{\frac{1}{k}}.
\]
Furthermore, since $c_k < \frac{e}{(k-1)!}$, we get that
\[
s \geq \frac{1}{k} \left( \frac{d (k-1)!}{e}\right)^{\frac{1}{k}}.
\]

We now show that $\frac{1}{k} \left( \frac{d (k-1)!}{e}\right)^{\frac{1}{k}} \geq \alpha \cdot d^\frac{1}{k}$.  Since $e \geq (1+\frac{1}{x})^x$ for all positive $x$ and $e^k \geq ke$, we get that
\[
\left( \frac{1}{e\alpha} \right)^k = e^k \geq ke \geq k(1+\frac{1}{k-1})^{k-1}.
\]
Manipulation shows that $(k-1)^{k-1} \geq k^k e^k \alpha^k$ and $\left( \frac{k-1}{e} \right) ^{k-1} \geq k^k e \alpha^k$.  Applying Stirling's formula, which says that $k! \geq \left( \frac{k}{e} \right)^k$ for all $k$, shows that $(k-1)! \geq k^k e \alpha^k$.  This yields that $\frac{1}{k} (\frac{(k-1)!}{e}) \geq \alpha^k$, and finally $\frac{1}{k} \left( \frac{(k-1)!}{e} \right)^{\frac{1}{k}} \geq \alpha$.  So
\[
s \geq \frac{1}{k} \left( \frac{d(k-1)!}{e} \right)^{\frac{1}{k}} \geq \alpha d^\frac{1}{k}
\] 
as desired.

\end{proof}

\section{Optimality of Our Coloring}
\label{optimal}

We show the coloring described in Section \ref{constr} achieves the greatest possible separation for the class of \emph{repeated} colorings.  We also describe a result by Palvolgyi which shows that $d \leq 2r(C)^2-r(C)$ for all \emph{sliced} colorings.  Since our coloring from Section 3 is both a repeated and sliced coloring, this coloring obtains the optimal separation between $d$ and $r(C)$ in both classes.

\subsection{Repeated colorings}

A repeated coloring is a coloring which is the $n$-fold Cartesian product of a coloring with $s^R=1$ (the red sensitivity).  The class of repeated colorings is analogous to the class of Boolean functions analyzed by Ambainis and Sun \cite{Ambainis and Sun}. Using a lemma by Palvolgyi et al. that  $d \leq 2s^B-1$ for colorings with $s^R=1$, we show that $d \leq s^R(2s^B-1) \leq 2r(C)^2-r(C)$ for all repeated colorings.  

Let $C$ be a coloring in $nk$ dimensions.  We define the $nk$ coordinates ${\{i,j\}}$, where $1 \leq i \leq n$ and $1 \leq j \leq k$. For a point $x$ with coordinates $x_{\{i,j\}}$, let $x_i$ be the $k$-tuple consisting of coordinates $x_{\{i,j\}}$ for all $1 \leq j \leq k$.

\begin{mydef}
$C$ is a \emph{repeated} coloring in $nk$ dimensions if there exists a non-trivial coloring $C'$ in $k$ dimensions such that:
\begin{enumerate}
\item $s^R(C')=1$
\item A point $x$ is colored blue in $C$ iff there exists an $i$, $1 \leq i \leq n$, such that $x_i$ is blue in $C'$.
\end{enumerate}
\end{mydef}
We assume that all repeated colorings $C$ are in $d=nk$ dimensions, with a corresponding $C'$ in $k$ dimensions.

\begin{myremark}
Our coloring from Section \ref{constr} is a repeated coloring.  The coloring is a Cartesian product of $n-1$ copies of a coloring $C'$ in $n$ dimensions.  Furthermore $s^R(C')=1$.  The $n$-dimensional coloring $C'$ consists of all slices of the form $S_{\{1,j\}}$ (with all but the first $n$ coordinates deleted) and as shown in Theorem 3, a red point can only be adjacent to at most one slice of the form $S_{\{a,j\}}$ for an integer $a$.  
\end{myremark}

\begin{mylem}
\label{concat}
For all repeated colorings $C$ in $nk$ dimensions, we have $s^R(C) = n$.  
\end{mylem}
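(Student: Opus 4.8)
The plan is to bound $s^R(C)$ from both sides by $n$. For the upper bound, take any red point $x$ in $C$ and write it as $(x_1,\dots,x_n)$ where each $x_i$ is a $k$-tuple. Since $x$ is red, property (2) of the definition forces every block $x_i$ to be red in $C'$. Now a differently-colored (i.e. blue) neighbor of $x$ in $C$ must change exactly one coordinate $\{a,b\}$, hence changes $x_a$ to a blue neighbor of $x_a$ in $C'$ while leaving every other block red; so the axes along which $x$ has a blue neighbor are partitioned among the $n$ blocks, and within block $i$ the number of such axes is at most the axis-sensitivity $r(C',x_i)\le s^R(C')=1$. Summing over $i$ gives $r(C,x)\le n$, so $s^R(C)\le n$.

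For the lower bound I would exhibit a red point of axis-sensitivity exactly $n$. Using the non-triviality of $C'$, for each $i$ there is a blue point on some axis of $C'$; more simply, pick for each $i$ a red $k$-tuple $p$ that has a blue $C'$-neighbor along some coordinate — this exists because $C'$ has a blue point and a red point (the origin) with some red/blue boundary, and in fact we can use the origin's neighborhood structure or the min-width blue point on an axis to locate a red tuple adjacent to blue. Then form $x=(p,p,\dots,p)$. Each block is red so $x$ is red, and flipping the witnessing coordinate inside any one of the $n$ blocks produces a point with exactly one blue block, hence blue in $C$; these $n$ perturbations lie on $n$ distinct axes, so $r(C,x)\ge n$, giving $s^R(C)\ge n$.

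The main obstacle is the lower-bound direction: one must be sure that a red $k$-tuple with a blue neighbor actually exists in $C'$, and that the resulting point $x$ really is red (all blocks red) so that its sensitivity counts toward $s^R(C)$ rather than $s^B(C)$. Since $C'$ is non-trivial it has the red origin and a blue point on each axis, so somewhere along an axis of $C'$ there is a red point immediately followed by a blue point; that red point $p$ serves as the witness, and taking $n$ copies of it keeps the combined point red. Once that witness is pinned down, both inequalities combine to give $s^R(C)=n$ exactly.
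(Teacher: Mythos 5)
Your proof is correct and follows essentially the same route as the paper: the upper bound comes from noting each red block contributes at most one sensitive axis since $s^R(C')=1$, and the lower bound from concatenating $n$ copies of a red point of $C'$ that has a blue neighbor. Your explicit justification that such a boundary red point exists (via non-triviality of $C'$) is a small detail the paper leaves implicit, but the argument is the same.
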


\begin{proof}
Consider a red point $x$ in $C$.  For all $i$, $r(C',x_i) \leq 1$.  Since $x$ is blue in $C$ if $x_i$ is blue in $C'$ for some $i$, there is at most one coordinate in each $x_i$ which contributes to the sensitivity of $x$.  Therefore $r(C,x) \leq n$, and $s^R(C) \leq n$.  

Furthermore, $s^R(C) \geq n$.  Let $y$ be a red point in $C'$ with $r(C',y)=1$, and let $z$ be the concatenation of $n$ copies of $y$.  The sensitivity of $z$ is $n$, so $s^R(C)=n$.
\end{proof}

\begin{mylem}
\label{same}
For all repeated colorings $C$, we have $s^B(C) = s^B(C')$.  
\end{mylem}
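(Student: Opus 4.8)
The plan is to prove the two inequalities $s^B(C)\ge s^B(C')$ and $s^B(C)\le s^B(C')$ separately, in each case exploiting the defining property of a repeated coloring: a point $x$ is \emph{red} in $C$ exactly when \emph{all} of its $n$ blocks $x_1,\dots,x_n$ are red in $C'$. The key structural observation I will use repeatedly is that a single coordinate flip $\{i,j\}$ only affects block $i$, so it can change the ``blueness status'' of at most one block.

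For the lower bound, I would take a blue point $y$ of $C'$ realizing the blue axis-sensitivity, i.e.\ $r(C',y)=s^B(C')$, and lift it to the point $z$ of $C$ with $z_1=y$ and $z_i=\vec{0}$ for $2\le i\le n$. Since $z_1=y$ is blue, $z$ is blue in $C$. A neighbor of $z$ that changes a coordinate outside block $1$ leaves $z_1=y$ blue, so it stays blue; a neighbor changing a coordinate $\{1,j\}$ corresponds to a neighbor $y'$ of $y$ in $\mathbb{Z}^k$, and since the remaining blocks are still the red origin, this neighbor is blue in $C$ iff $y'$ is blue in $C'$. Hence axis $\{1,j\}$ is sensitive at $z$ iff axis $j$ is sensitive at $y$, which gives $r(C,z)=r(C',y)=s^B(C')$ and therefore $s^B(C)\ge s^B(C')$.

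For the upper bound, I would take a blue point $x$ of $C$ with $r(C,x)=s^B(C)$ and set $I=\{i:x_i\text{ is blue in }C'\}$, which is nonempty because $x$ is blue. If $|I|\ge 2$, then any neighbor of $x$ still retains a blue block (a flip can ruin at most one), so every neighbor of $x$ is blue and $r(C,x)=0\le s^B(C')$. If $I=\{i_0\}$, then only neighbors changing a coordinate in block $i_0$ can possibly be red, and such a neighbor is red in $C$ iff the corresponding neighbor of $x_{i_0}$ is red in $C'$ (the other blocks stay red); hence $r(C,x)=r(C',x_{i_0})\le s^B(C')$ since $x_{i_0}$ is blue. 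Combining the two inequalities gives $s^B(C)=s^B(C')$.

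I do not expect a real obstacle here; the only point needing care is the upper-bound case split, namely recognizing that a blue point with two or more blue blocks is completely insensitive, while a blue point with exactly one blue block has its sensitivity governed entirely by that block inside $C'$. Note that condition~(1) of the definition, $s^R(C')=1$, is not used in this lemma (it enters elsewhere, e.g.\ via the bound $d\le 2s^B-1$).
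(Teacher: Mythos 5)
Your proof is correct and follows essentially the same route as the paper: both arguments classify a blue point of $C$ by how many of its blocks are blue in $C'$, observing that two or more blue blocks force axis-sensitivity $0$, while exactly one blue block gives $r(C,x)=r(C',x_{i_0})\le s^B(C')$. Your explicit lifting of an optimal blue point of $C'$ (padded with red origin blocks) to establish $s^B(C)\ge s^B(C')$ merely spells out a step the paper leaves implicit, so there is no substantive difference in approach.
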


\begin{proof}
Consider a blue point $x$ in $C$.  If more than one $x_i$ is blue in $C'$ then $s^B(C)=0$, since changing one coordinate of $x$ will not change all the blue $x_i$ to red.  So, the sensitivity of $x$ is maximized when exactly one $x_i$ is blue in $C'$, and in this case $r(C,x) = r(C',x_i)$.  Therefore $s^B(C)=s^B(C')$.
\end{proof}

\begin{mythm}
For all repeated colorings $C$, we have $d \leq s^R(C) \cdot (2s^B(C)-1)$.  
\end{mythm}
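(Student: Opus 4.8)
The plan is to assemble this theorem from the two lemmas just proved together with the quoted bound of Palvolgyi et al.; there is essentially no new combinatorial content to extract, only bookkeeping. First I would recall that, by the definition of a repeated coloring, $C$ lives in $d = nk$ dimensions, where $C'$ is the associated $k$-dimensional non-trivial coloring with $s^R(C') = 1$. The goal inequality $d \le s^R(C)\cdot(2s^B(C)-1)$ will follow once both factors on the right are identified with the natural parameters of the decomposition.

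Next I would apply Lemma \ref{concat}, which gives $s^R(C) = n$, and Lemma \ref{same}, which gives $s^B(C) = s^B(C')$. Hence it suffices to bound the remaining factor $k$ by $2s^B(C')-1$. This is exactly where the lemma of Palvolgyi et al.\ enters: since $C'$ is a non-trivial coloring with red sensitivity $s^R(C')=1$, that lemma yields $k \le 2s^B(C') - 1$. Multiplying through by $n = s^R(C)$ and substituting $s^B(C') = s^B(C)$ gives
\[
d = nk \le n\bigl(2s^B(C') - 1\bigr) = s^R(C)\bigl(2s^B(C) - 1\bigr),
\]
which is the claim. As an immediate consequence I would record that, because $s^R(C) \le r(C)$ and $s^B(C) \le r(C)$, one obtains $d \le r(C)\bigl(2r(C)-1\bigr) = 2r(C)^2 - r(C)$, so the coloring constructed in Section \ref{constr} is optimal within the class of repeated colorings.

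The main thing to be careful about is not a genuine obstacle but a hypothesis check: one must verify that the preconditions of the Palvolgyi lemma are met for $C'$, namely that $C'$ is non-trivial (which is built into the definition of a repeated coloring) and that no degenerate case such as $s^B(C') = 0$ can undermine the inequality. Non-triviality forces a blue point on each of the $k$ axes of $C'$, so $s^B(C') \ge 1$ and the bound $k \le 2s^B(C')-1$ is both applicable and nonvacuous. With that in hand, the chain of equalities and the single cited inequality close the proof.
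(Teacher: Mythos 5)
Your proposal is correct and follows essentially the same route as the paper: apply the Palvolgyi et al.\ bound $k \le 2s^B(C')-1$ to the base coloring $C'$, then use Lemma \ref{same} to replace $s^B(C')$ by $s^B(C)$ and Lemma \ref{concat} to replace $n$ by $s^R(C)$, giving $d = nk \le s^R(C)(2s^B(C)-1)$. The extra hypothesis check on $C'$ is a reasonable addition but does not change the argument.
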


\begin{proof}
We use a lemma
by Palvolgyi et al. \cite{Aaronson: mathoverflow} that for all non-trivial colorings with $s^R=1$, $d \leq 2s^B-1$.
By this lemma, $k \leq 2s^B(C')-1$.  Applying Lemma \ref{same}, we get that $k \leq 2s^B(C)-1$.
Furthermore, $kn \leq n \cdot (2s^B(C)-1)$ and by Lemma \ref{concat}, we get that
\[
d \leq s^R(C) \cdot (2s^B(C)-1).
\]
Since $s^R$ and $s^B$ are at most $r(C)$, we obtain that $d \leq 2r(C)^2 - r(C)$ as desired.
\end{proof}

Therefore, our coloring achieves the largest possible separation of all repeated colorings, since it satisfies $d=2r(C)^2-r(C)$.

\subsection{Sliced colorings}

\begin{mydef}
A non-trivial coloring $C$ in $d$ dimensions is a \emph{sliced coloring} if the set of blue points is the union of exactly $d$ slices with non-zero coordinate (the coordinate in a slice which is fixed to be a non-zero constant) at least 3.
\end{mydef}

$C$ is made up of $d$ slices, and by the non-triviality condition, one slice must intersect each axis.  So the $d$ slices each have their non-zero coordinates in a different dimension, since a slice intersects the $i$th axis iff its non-zero coordinate is in the $i$th dimension.  Let the slice with its non-zero coordinate in the $i$th dimension be $S_i$.

\begin{myremark}
The coloring described in Section \ref{constr} is a sliced coloring.
\end{myremark}

\begin{mylem}  (Palvolgyi \cite{Aaronson: mathoverflow})
\label{intersect}
The red sensitivity of a sliced coloring $C$ is at least the maximum number of slices which intersect at one point.
\end{mylem}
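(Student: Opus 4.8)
The plan is to exhibit an explicit red point of axis-sensitivity at least $m$, where $m$ is the maximum number of slices meeting at a common point. Fix a point $p$ lying in $m$ slices, and --- this choice is crucial --- let $S_{i_1},\dots,S_{i_m}$ be \emph{all} the slices that contain $p$, where $S_{i_j}$ has non-zero coordinate $c_{i_j}\ge 3$ in dimension $i_j$ and forces a set $X_{i_j}$ of coordinates to vanish (with $i_j\notin X_{i_j}$). Since distinct slices of a sliced coloring live in distinct dimensions, $i_1,\dots,i_m$ are distinct, and $p_{i_j}=c_{i_j}$ for every $j$. Define $q := p-\sum_{j=1}^m e_{i_j}$, i.e.\ $q$ agrees with $p$ except that $q_{i_j}=c_{i_j}-1$ for each $j$.

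First I would check that $q$ is red. Suppose instead $q$ lies in some slice $S_\ell$ with non-zero coordinate $c_\ell$ in dimension $\ell$ and zero-set $X_\ell$. Since $c_{i_j}-1\ge 2$ is neither $c_{i_j}$ nor $0$, neither $\ell$ nor any element of $X_\ell$ can belong to $\{i_1,\dots,i_m\}$; hence $q$ and $p$ agree on $\{\ell\}\cup X_\ell$, so $p_\ell=c_\ell$ and $p$ vanishes on $X_\ell$, i.e.\ $p\in S_\ell$. But then $S_\ell$ is one of the slices through $p$, contradicting that $\{i_1,\dots,i_m\}$ enumerates all of them. So $q$ lies in no slice and is red.

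Next I would check that for each $j$ the neighbor $q+e_{i_j}$ is blue, by showing $q+e_{i_j}\in S_{i_j}$. Its $i_j$-coordinate is $(c_{i_j}-1)+1=c_{i_j}$, as required. For the zero-set: because $p\in S_{i_j}$ we have $p_t=0$ for $t\in X_{i_j}$, while $p_{i_k}=c_{i_k}\ge 3\neq 0$ for every $k$; hence $X_{i_j}$ is disjoint from $\{i_1,\dots,i_m\}$, so on $X_{i_j}$ the point $q+e_{i_j}$ agrees with $q$, which agrees with $p$, which is $0$ there. Thus $q+e_{i_j}\in S_{i_j}$ and is blue. Since the $i_j$ are distinct, $q$ is a red point with differently-colored neighbors along $m$ distinct axes, so $s^R(C)\ge r(C,q)\ge m$.

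The only real subtlety --- and the step I would be most careful with --- is the argument that $q$ is red: it is tempting to start with just $m$ slices through $p$, but then $q$ could in principle lie in a forgotten $(m{+}1)$-st slice through $p$. Insisting that $S_{i_1},\dots,S_{i_m}$ be \emph{every} slice containing $p$ closes this gap, and the hypothesis that non-zero coordinates are at least $3$ (so that $c_{i_j}-1\notin\{0,c_{i_j}\}$) is exactly what makes the ``$q$ and $p$ agree off $\{i_1,\dots,i_m\}$'' bookkeeping go through.
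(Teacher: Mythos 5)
Your proof is correct and follows essentially the same approach as the paper: the paper perturbs the common intersection point by $+1$ in each of the relevant slice dimensions and notes that decreasing any of those coordinates returns to a slice, whereas you shift by $-1$ and increase, which is an equivalent variant. Your additional care in verifying that the perturbed point is red (using that the chosen slices are \emph{all} slices through the intersection point, which holds automatically by maximality) fills in a detail the paper merely asserts.
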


\begin{proof}

Let $n$ be the maximum number of slices which intersect.  Suppose that $n$ slices labelled $S_{k_1}, \ldots, S_{k_n}$ intersect at some point $x$ (so $S_{k_i}$ has its non-zero coordinate in the $k_i$th coordinate).  Define the point $x'$, where $x'_{k_i} = x_{k_i} +1$ for all $i$ from 1 to $n$, and $x'_j=x_j$ for all $j$ not equal to $k_i$ for some $i$.  Then $x'$ is red and has sensitivity $n$, since decreasing any coordinate $x'_{k_i}$ yields a blue point, so $s^R \geq n$.   

\end{proof}

\begin{mythm} (Turan) 
A graph induced on $n$ vertices with average degree at most $k$ has an independent set of size at least $\frac{n}{k+1}$.
\end{mythm}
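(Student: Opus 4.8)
The plan is to use the probabilistic argument of Caro and Wei: exhibit a random independent set whose expected size is large, then conclude by averaging. Write $d(v)$ for the degree of a vertex $v$, so that $\sum_{v} d(v) = 2|E| \le kn$ since the average degree is at most $k$.

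First I would fix a uniformly random linear ordering $\pi$ of the $n$ vertices and let $I_\pi$ be the set of vertices $v$ that appear before every one of their neighbors in $\pi$. The set $I_\pi$ is independent, because if $u$ and $v$ were adjacent they could not each precede the other. For a fixed vertex $v$, the event $v \in I_\pi$ depends only on the relative order of $v$ and its $d(v)$ neighbors, and each of these $d(v)+1$ vertices is equally likely to come first, so $\Pr[v \in I_\pi] = \frac{1}{d(v)+1}$. By linearity of expectation,
\[
\mathbb{E}\big[\,|I_\pi|\,\big] = \sum_{v} \frac{1}{d(v)+1}.
\]

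Next I would lower-bound this sum. The function $t \mapsto \frac{1}{t+1}$ is convex on $[0,\infty)$, so Jensen's inequality gives $\frac{1}{n}\sum_v \frac{1}{d(v)+1} \ge \frac{1}{\bar d + 1}$, where $\bar d = \frac{1}{n}\sum_v d(v)$ is the average degree. Since $\bar d \le k$ and $t \mapsto \frac{1}{t+1}$ is decreasing, $\frac{1}{\bar d+1} \ge \frac{1}{k+1}$, hence $\mathbb{E}[\,|I_\pi|\,] \ge \frac{n}{k+1}$. Because the expected size of $I_\pi$ is at least $\frac{n}{k+1}$, some ordering $\pi$ must yield $|I_\pi| \ge \frac{n}{k+1}$, and that $I_\pi$ is the desired independent set.

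I expect no serious obstacle here; the one point to watch is the direction of the two inequalities in the last step — convexity is used to pass from the sum to the average, and monotonicity to replace $\bar d$ by its upper bound $k$. An alternative would be a greedy/inductive argument, repeatedly deleting a minimum-degree vertex together with its neighbors, but making the induction carry the average-degree hypothesis cleanly through the deletions is slightly fiddly, so the random-permutation proof is preferable.
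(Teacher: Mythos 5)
Your proof is correct: the random-permutation (Caro--Wei) argument is carried out accurately, the probability $\frac{1}{d(v)+1}$ is justified properly, and the Jensen step uses convexity and monotonicity in the right directions to pass from $\sum_v \frac{1}{d(v)+1}$ to $\frac{n}{k+1}$. Note that the paper itself states this theorem as a known result attributed to Tur\'an and gives no proof at all, so there is nothing to diverge from; your argument is a standard and complete justification of exactly the average-degree form of the bound that the paper later applies to the graph $G'$ in the sliced-coloring theorem, and it would serve as a self-contained substitute for the citation.
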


\begin{mythm}
\label{slicedthm}
(Palvolgyi \cite{Aaronson: mathoverflow})   For all non-trivial sliced colorings $C$, $d \leq s^R(2s^B-1)$.
\end{mythm}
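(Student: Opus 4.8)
The plan is to produce a large family of pairwise intersecting slices and then quote Lemma~\ref{intersect}. Index the $d$ slices of $C$ by $\{1,\dots,d\}$, where $S_i$ pins coordinate $i$ to a nonzero constant $c_i\ge 3$ and pins the coordinates in some set $X_i\subseteq\{1,\dots,d\}\setminus\{i\}$ to $0$. Define the \emph{conflict graph} $G$ on $\{1,\dots,d\}$ by joining $i$ and $j$ exactly when $S_i\cap S_j=\varnothing$; from the slice definitions this happens precisely when $i\in X_j$ or $j\in X_i$. If $I$ is an independent set of $G$, then the slices $\{S_i:i\in I\}$ are pairwise intersecting, and because their constraints are pairwise compatible they all contain the single point $p$ with $p_i=c_i$ for $i\in I$ and $p_\ell=0$ for $\ell\notin I$ (one checks $X_i\cap I=\varnothing$ for each $i\in I$, using that $i,j\in I$ forces $j\notin X_i$). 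Hence $s^R(C)\ge|I|$ by Lemma~\ref{intersect}. It therefore suffices to find an independent set of $G$ of size at least $d/(2s^B-1)$, which I would extract from Turán's theorem provided the average degree of $G$ is at most $2s^B-2$.

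The heart of the argument is the bound $|X_i|\le s^B-1$ for every slice $S_i$. I would prove it by exhibiting a blue point of axis-sensitivity $|X_i|+1$, namely the point $q$ with $q_i=c_i$, with $q_\ell=0$ for $\ell\in X_i$, and with $q_m=1$ for every remaining coordinate $m$. Since every $c_m\ge 3$ while every coordinate of $q$ other than coordinate $i$ lies in $\{0,1\}$, the point $q$ lies in $S_i$ and in no other slice, so it is blue. Moving $q$ by $\pm 1$ along coordinate $i$, or along any $\ell\in X_i$, leaves $S_i$ and lands at a point all of whose coordinates other than coordinate $i$ lie in $\{-1,0,1\}$, hence a point in no slice at all; that gives $|X_i|+1$ axes each contributing a red neighbor. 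Moving $q$ along any of the free coordinates $m$ keeps it inside $S_i$ (the slice does not constrain coordinate $m$), so those axes contribute nothing. Thus $r(C,q)=|X_i|+1$ and $s^B(C)\ge|X_i|+1$.

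Granting this, each edge $\{i,j\}$ of $G$ is certified by $j\in X_i$ or $i\in X_j$, so $|E(G)|\le\sum_i|X_i|\le d(s^B-1)$, whence the average degree of $G$ is at most $2(s^B-1)=2s^B-2$. Turán's theorem (applied with $n=d$ and $k=2s^B-2$) then yields an independent set $I$ with $|I|\ge d/\bigl((2s^B-2)+1\bigr)=d/(2s^B-1)$, and combining with $s^R(C)\ge|I|$ from the first paragraph gives $d\le s^R(2s^B-1)$. Together with $s^R,s^B\le r(C)$ this also recovers $d\le 2r(C)^2-r(C)$. The degenerate case $s^B=1$ (forcing every $X_i=\varnothing$, so that all $d$ slices meet at a point and $s^R\ge d$) is consistent and can be dispatched directly.

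The only step I expect to be genuinely delicate is the key lemma $|X_i|\le s^B-1$: one must verify that the constructed point $q$ is blue, that each of its $|X_i|+1$ ``defining-coordinate'' moves produces a point lying in \emph{no} slice (rather than accidentally slipping into a neighboring slice), and that its free-coordinate moves stay blue. The role of the hypothesis $c\ge 3$ is exactly to keep these perturbed coordinates, which take values in $\{-1,0,1\}$, away from every slice constant; everything else is the routine ``large clique via Turán on the complement graph, then Lemma~\ref{intersect}'' packaging.
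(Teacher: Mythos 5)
Your proposal is correct and follows essentially the same route as the paper: bound the number of zero-fixed coordinates of each slice by $s^B-1$, build the non-intersection (conflict) graph, apply Turán's theorem to get an independent set of size at least $d/(2s^B-1)$, and convert that independent set into mutually intersecting slices handled by Lemma~\ref{intersect}. The only difference is expository—you spell out the explicit blue witness point for $|X_i|\le s^B-1$ and the explicit common point of the pairwise intersecting slices, which the paper asserts more briefly.
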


\begin{proof}
For all $S_i$, $1 \leq i \leq d$, let $B_i$ be the set of coordinates which are fixed to be 0 in the slice.  

We note that there can be at most $s^B$ fixed coordinates of a slice including the non-zero coordinate, since changing a fixed coordinate contributes to the sensitivity of a blue point in the slice.  So $|B_i| \leq s^B-1$. 

Consider the following directed graph $G$ on $d$ vertices, labelled 1 through $d$.  Draw a directed edge from $i$ to $j$ iff $i \in B_j$.  Since $i \not\in B_i$, there are no loops in the graph.  There is a directed edge from $i$ to $j$ iff $S_i$ and $S_j$ \emph{don't} intersect.  This is because if there is an edge from $i$ to $j$ then the $i$th coordinate is non-zero in $S_i$ and 0 in $S_j$, and conversely if there is no edge then the slices intersect at the point $(0,\ldots,0,c,0,\ldots,0)$, where $c$ is the non-zero coordinate of $S_i$ in the $i$th coordinate.

We claim that the maximum size of an independent set in this graph is $s^R$.    Assume to the contrary that there is an independent set of size $s^R+1$.  For any $i$ and $j$ in the independent set, $B_i$ and $B_j$ intersect.  If two slices intersect and a third slice intersects both of them, then all three slices mutually intersect.
The $s^R+1$ slices corresponding to vertices in the independent set all pairwise intersect, and so they all mutually intersect.  This is a contradiction of Lemma \ref{intersect} that at most $s^R$ slices can mutually intersect at a point.

We now show that $d \leq s^R(2s^B-1)$ by applying Turan's theorem.  Consider the corresponding undirected graph, $G'$, of $G$.  
Note that outdegree of a vertex in $G$ is at most $s^B-1$ since $|B_i| \leq s^B-1$, and thus the degree of a vertex in $G'$ is at most $2s^B-2$.  Clearly the average degree is at most $2s^B-2$, and so by Turan's theorem there is an independent set of size at least $\frac{d}{2s^B-1}$.  However, the size of an independent set can be at most $s^R$, so $s^R \geq \frac{d}{2s^B-1}$ and $d \leq s^R(2s^B-1)$.
\end{proof}

\begin{mycor}
For all non-trivial sliced colorings $C$, it holds that $d \leq 2r(C)^2-r(C)$.
\end{mycor}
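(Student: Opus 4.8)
The plan is to derive the corollary directly from Theorem \ref{slicedthm} together with the elementary relations between $s^R$, $s^B$, and $r(C)$ established in the preliminaries. Theorem \ref{slicedthm} already gives, for every non-trivial sliced coloring, the bound $d \leq s^R(2s^B-1)$, so all that remains is to replace $s^R$ and $s^B$ by $r(C)$ and simplify.

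First I would recall that, by definition, $s^R(C)$ is the maximum axis-sensitivity over red points and $s^B(C)$ is the maximum axis-sensitivity over blue points, while $r(C)$ is the maximum axis-sensitivity over \emph{all} points; hence $s^R(C) \leq r(C)$ and $s^B(C) \leq r(C)$. Next I would note that for a non-trivial coloring $s^B(C) \geq 1$ (the nearest blue point to the origin along any axis has a red neighbor one step toward the origin), so $2s^B(C) - 1 \geq 1 > 0$; this guarantees that the substitution is monotone and does not flip the inequality. Combining, $d \leq s^R(C)\,(2s^B(C)-1) \leq r(C)\,(2r(C)-1) = 2r(C)^2 - r(C)$, which is exactly the claimed bound.

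There is essentially no obstacle here: the work is entirely in Theorem \ref{slicedthm} (and behind it, Lemma \ref{intersect} and Turán's theorem), and the corollary is a one-line consequence. The only point worth a sentence of care is the positivity of $2s^B(C)-1$, so that bounding $s^R$ and $s^B$ above by $r(C)$ genuinely increases the right-hand side; I would mention this explicitly rather than leave it implicit. Since our coloring from Section \ref{constr} is a sliced coloring attaining $d = 2r(C)^2 - r(C)$, I would close by remarking that the bound of the corollary is tight within this class.

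\begin{proof}
By Theorem \ref{slicedthm}, every non-trivial sliced coloring $C$ satisfies $d \leq s^R(C)\,(2s^B(C)-1)$. Since the red and blue sensitivities are maxima of the axis-sensitivity over restricted sets of points, we have $s^R(C) \leq r(C)$ and $s^B(C) \leq r(C)$. Moreover $s^B(C) \geq 1$ for a non-trivial coloring, since the closest blue point to the origin along any axis has a red neighbor one unit toward the origin; hence $2s^B(C)-1 \geq 1 > 0$, and replacing $s^R(C)$ and $s^B(C)$ by the larger quantity $r(C)$ only increases the right-hand side. Therefore
\[
d \leq s^R(C)\,(2s^B(C)-1) \leq r(C)\,(2r(C)-1) = 2r(C)^2 - r(C),
\]
as claimed. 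The coloring constructed in Section \ref{constr} is a non-trivial sliced coloring with $d = 2r(C)^2 - r(C)$, so this inequality is best possible for the class of sliced colorings.
\end{proof}
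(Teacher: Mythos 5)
Your proof is correct and matches the paper's intent exactly: the corollary is meant as an immediate consequence of Theorem \ref{slicedthm} together with the bounds $s^R(C) \leq r(C)$ and $s^B(C) \leq r(C)$, precisely the substitution you carry out (and your remark about $2s^B(C)-1 > 0$ justifying the monotone substitution is a small but welcome bit of extra care the paper leaves implicit).
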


Therefore, our coloring achieves the maximum separation in the class of sliced colorings, as well as in the class of repeated colorings.  In order to construct a coloring with a larger separation, one would need to resort to considerably different methods and obtain a function outside of both of these classes.

\section{Acknowledgments} 
The author thanks Dr. Scott Aaronson at MIT for his guidance and for introducing her to this research problem.  She thanks the Center for Excellence in Education (CEE) for sponsoring the Research Science Institute (RSI) program at MIT, as well as the staff at RSI.

\newpage

\end{document}